\newcommand{\norm}[1]{\left\lVert#1\right\rVert}
\def\R{\mathbb{R}}
\renewenvironment{equation*}{\[}{\]\ignorespacesafterend}
\begin{document}

\title{On the hardness of the $L_1-L_2$ regularization problem
}

\author{Yuyuan Ouyang         \and
        Kyle Yates
}

\institute{Y. Ouyang \at
              School of Mathematical and Statistical Sciences, Clemson University \\
              \email{yuyuano@clemson.edu}           
           \and
           K. Yates \at
              School of Mathematical and Statistical Sciences, Clemson University\\
              \email{kjyates@clemson.edu}  
}

\date{Received: date / Accepted: date}

\maketitle

\begin{abstract}
The sparse linear reconstruction problem is a core problem in signal processing which aims to recover sparse solutions to linear systems. The original problem regularized by the total number of nonzero components (also known as $L_0$ regularization) is well-known to be NP-hard. The relaxation of the $L_0$ regularization by using the $L_1$ norm offers a convex reformulation, but is only exact under certain conditions (e.g., restricted isometry property) which might be NP-hard to verify. 
To overcome the computational hardness of the $L_0$ regularization problem while providing tighter results than the $L_1$ relaxation, several alternate optimization problems have been proposed to find sparse solutions. One such problem is the $L_1-L_2$ minimization problem, which is to minimize the difference of the  $L_1$ and $L_2$ norms subject to linear constraints. This paper proves that solving the $L_1-L_2$ minimization problem is NP-hard. Specifically, we prove that it is NP-hard to minimize the $L_1-L_2$ regularization function subject to linear constraints. Moreover, it is also NP-hard to solve the unconstrained formulation that minimizes the sum of a least squares term and the $L_1-L_2$ regularization function. Furthermore, restricting the feasible set to a smaller one by adding nonnegative constraints does not change the NP-hardness nature of the problems.

\keywords{$L_1-L_2$ Regularization \and $L_1-L_2$ Minimization \and NP-hard \and NP-hardness \and Compressed Sensing \and Nonconvex Programming
}

\subclass{68Q25 \and 94A12 \and 90C30
}

\end{abstract}

\section{Introduction}

The sparse linear reconstruction problem, also known as the compressed sensing problem (CS), is a core problem in signal processing. The (CS) problem is the problem of finding a sparsest solution to a linear system. The original (CS) problem is the optimization problem
\begin{equation}\label{eq:CS}
        \tag{CS}
\begin{split}
    \min_{\textbf{x}\in \R^n} & \norm{\textbf{x}}_0\\
    \text{subject to } & \textbf{Ax}=\textbf{b},
\end{split}
\end{equation}
where $\norm{\textbf{x}}_{0}$ denotes the number of nonzero entries of $\textbf{x}$, also known as the $L_0$ norm. Of particular interest for several optimization problems such as problem \eqref{eq:CS} is their computational hardness. In computational complexity theory, a common characterization on the hardness of a problem is its NP-hardness. Specifically, a decision problem is in the nondeterministic polynomial-time (NP) class if it can be solved by a nondeterministic Turing machine in polynomial time, and it is NP-hard if every problem in NP can be reduced to it by a polynomial-time reduction. Informally, a problem is NP if it is easy to verify whether a proposed solution is correct, and a problem is NP-hard if solving it is as hard as solving any NP problem. For a detailed description of NP, NP-hardness, and other topics in computational complexity theory, see, e.g, \cite{garey1979computers}. The common understanding is that a problem is considered difficult to solve if it can be proven to be NP-hard.

Unfortunately, problem (\ref{eq:CS}) is known to be NP-hard \cite{Natarajan1995SparseAS} (See also \cite{foucart2013compressive}). Specifically, one can construct special instances of problem (\ref{eq:CS}) such that solving them would yield solutions to the exact 3-cover problem, which is already known to be NP-hard. Consequently, problem (\ref{eq:CS}) must also be NP-hard. Informally, here the key idea is to demonstrate that problem (\ref{eq:CS}) is at least as hard as a known NP-hard problem, hence establishing its NP-hardness. This analysis strategy, known as polynomial-time reduction, is a standard technique for proving NP-hardness in computational complexity theory.

Several alternate optimization problems have been proposed to approximate solutions to problem (\ref{eq:CS}) in order to overcome this computational hardness. This includes techniques such as minimizing the $L_1$ norm subject to $\textbf{Ax}=\textbf{b}$ for instance, which can lead to exact recovery of a sparsest solution under certain conditions \cite{doi:10.1073/pnas.0437847100,candes2005stablesignalrecoveryincomplete}. However, verifying the exact recovery conditions of the $L_1$ relaxation could itself be NP-hard; see, e.g., \cite{bandeira2013certifying,tillmann2013computational} on the NP-hardness of certifying the restricted isometry property and the nullspace property. Several other models have been proposed in the hope of providing tighter relaxation than the $L_1$ model, e.g., $L_p$ \cite{frank1993statistical}, smoothly clipped absolute deviation (SCAD) \cite{fan2001variable}, minimax  concave penalty (MCP) \cite{zhang2010nearly}, etc. Unfortunately, many such models have too been shown to be NP-hard (see, e.g., \cite{Chen2011ComplexityOU,JMLR:v20:17-373,10.5555/2110269.2110276,doi:10.1080/00949650902773544,articlecomplexity,Nguyen2019NPhardnessO} and the references within). In spite of the unfavorable complexity results, it should be noted that several numerical methods have been developed that can efficiently solve these problems in practice (see, e.g., \cite{10.5555/3600270.3603093,Tibshirani_2011}).

Among the regularization strategies in sparse solution recovery, one important strategy is the $L_1-L_2$ minimization problem \cite{doi:10.1137/13090540X}. The idea is to add a penalty term that is the difference of the $L_1$ and $L_2$, yielding a non-convex optimization problem
\begin{equation}\label{eqnmain}
\begin{split}
    \min_{\textbf{x}\in \R^n}& \norm{\textbf{x}}_1 - \norm{\textbf{x}}_2\\
    \textup{subject to }& \textbf{Ax}=\textbf{b}. \\
    \end{split}
\end{equation}
For a fixed penalty parameter $\lambda >0$, the unconstrained version of problem (\ref{eqnmain}) is the regularization model
\begin{equation}\label{unconstrainedequation}
    \begin{split}
    \min_{\textbf{x}\in \R^{n}} &\norm{\textbf{Ax}-\textbf{b}}_2^2 + \lambda(\norm{\textbf{x}}_1 - \norm{\textbf{x}}_2). 
    \end{split}
 \end{equation}
We will refer to problems (\ref{eqnmain}) and (\ref{unconstrainedequation}) as the constrained (CP) and unconstrained $L_1-L_2$ minimization problems (UP), respectively. Various studies have been conducted on constrained and unconstrained $L_1-L_2$ minimization (see, e.g., \cite{BI2022337,bui2021weighteddifferenceanisotropicisotropic,Li20201,Yin2015MinimizationO}, and the references within), but most focus on the effectiveness of recovering sparse signals.

To the best of our knowledge, there has been no results developed in the literature concerning the computational complexity of  problems (\ref{eqnmain}) and (\ref{unconstrainedequation}). What makes the $L_1-L_2$ model interesting from the complexity analysis point of view is that one could not derive any NP-hardness result based on previously developed hard instances of other non-convex regularization models. Indeed, for all the papers on the NP-hardness results of (CS)
\cite{Chen2011ComplexityOU,JMLR:v20:17-373,10.5555/2110269.2110276,doi:10.1080/00949650902773544,articlecomplexity,Nguyen2019NPhardnessO}, none are applicable to problems  (\ref{eqnmain}) and (\ref{unconstrainedequation}). Among the NP-hardness proofs, the most general of such results is probably \cite{JMLR:v20:17-373}, in which they prove the strong NP-hardness for several unconstrained problems with a broad class of  loss and penalty functions. This class consists of several sparse optimization approximators but does not include problem (\ref{unconstrainedequation}). In particular, \cite{JMLR:v20:17-373} shows that for given $\textbf{A}=(\textbf{a}_1,\dots,\textbf{a}_m)^\top \in \R^{m\times n}$ and any parameter $\lambda >0$, the problem
\begin{align*}
	\begin{split}
		\min_{\textbf{x}\in \R^n} &\sum_{i=1}^{m} \ell(\textbf{a}_i^\top \textbf{x}, b_i) + \lambda \sum_{j=1}^{n} p (|x_j|)
	\end{split}
\end{align*}
is strongly NP-hard for some  loss functions $\ell: \R\times \R \rightarrow \R^+$ and some penalty functions $p: \R \rightarrow \R^+$, where $\R^+$ denotes the nonnegative real numbers. This result does not apply to our objective function in problem (\ref{unconstrainedequation}) however, as the term $(\norm{\textbf{x}}_1 - \norm{\textbf{x}}_2)$ can not be written in the penalty form required by \cite{JMLR:v20:17-373}. Thus, the computational hardness of problem (\ref{unconstrainedequation}) had therefore remained open.

It should be noted that extensive efforts on solving problems \eqref{eqnmain} and \eqref{unconstrainedequation} have been reported in the literature, despite the unknown status of their computational hardness. Observing that problems \eqref{eqnmain} and \eqref{unconstrainedequation} belong to the class of \textit{difference of convex} (DC) programming problems,
in \cite{Yin2015MinimizationO} the authors
develop an iterative method based on a DC algorithm (DCA) to numerically solve the unconstrained problem \eqref{unconstrainedequation}. The findings in \cite{Yin2015MinimizationO} suggest that $L_1-L_2$ minimization via DCA outperforms $L_p$ solvers in recovering sparse signals for ill-conditioned matrices. The authors also reported that $L_1-L_2$ minimization via DCA outperforms the alternating direction method of multipliers (ADMM) \cite{8186925} for $L_1$ regularization in every case. Subsequent studies (\cite{10.1007/978-3-319-18161-5_15,10.1007/s10915-014-9930-1}) back up these findings for both the constrained and unconstrained versions of $L_1-L_2$ minimization, suggesting that these models may offer a promising solution to finding sparse solutions in practice. Other follow-up studies also studies the methods for improving the performance of the ADMM method for $L_1-L_2$ minimization problems (see, e.g., \cite{lou2018fast}).

\subsection{Our Contributions} In this paper, we prove the NP-hardness of both the constrained and unconstrained $L_1-L_2$ minimization problems. We prove (\ref{eqnmain}) is NP-hard by showing a polynomial-time reduction from the NP-complete partition problem to a variation of constrained $L_1-L_2$ minimization with an additional fixed positive parameter, which holds for decision variables in both $\R^+$ and $\R$. We then prove that (\ref{unconstrainedequation}) is NP-hard for a range of $\lambda$, also by providing a polynomial-time reduction from the partition problem to the unconstrained $L_1-L_2$ minimization problem. Much like the constrained version, our NP-hardness results again hold for decision variables in both $\R^+$ and $\R$.

\subsection{Notation}
For any positive integer $n$, we denote by $[n]$ the set of natural numbers from $1$ to $n$, $\textbf{I}_n$ the $n\times n $ identity matrix, and $\textbf{1}_n\in\mathbb{R}^n$ the vector of $n$ ones. We use bold lowercase lettering for vectors and bold uppercase lettering for matrices.

\section{NP-Hardness of Constrained $L_1-L_2$ Minimization}\label{constrained.section}

In this section, we prove that it is NP-hard to solve the constrained $L_1-L_2$ minimization problem (\ref{eqnmain}). In fact, we will prove a slightly more general statement that for any parameter
 $\tau\in (0,\sqrt{2})$, it is NP-hard to solve the following constrained minimization problem:
\begin{equation}
	\label{eq:L1tauL2}
        \tag{CP}
	\begin{split}
	\min_{\textbf{x}\in \R^n}& \norm{\textbf{x}}_1 - \tau\norm{\textbf{x}}_2\\
	\textup{subject to }& \textbf{Ax}=\textbf{b}. \\
	\end{split}
\end{equation}
Throughout this paper, we call the above the constrained $L_1-\tau L_2$ problem (\ref{eq:L1tauL2}). While \eqref{eq:L1tauL2} reduces to problem (\ref{eqnmain}) when $\tau=1$, our NP-hardness result on more general $\tau$ in an interval shows that the major computational hardness of solving the $L_1-L_2$ problem is the nonconvex $(-\|\textbf{x}\|_2)$ term; such hardness could not be alleviated by setting a slightly smaller coefficient parameter $\tau$ in front of the nonconvex $(-\|\textbf{x}\|_2)$ term. As a byproduct of our analysis, we will also show that the $L_1-L_2$ problem does not become any easier when we only search its solution from a smaller feasible set by enforcing additional nonnegative constraints. Specifically, for any $\tau>0$, we will show that the following problem is also NP-hard:
\begin{equation}
	\label{eq:L1tauL2_nonneg}
        \tag{NCP}
	\begin{split}
	\min_{\textbf{x}\in \R^n}& \norm{\textbf{x}}_1 - \tau\norm{\textbf{x}}_2\\
	\textup{subject to }& \textbf{Ax}=\textbf{b} \text{ and }\textbf{x}\ge \textbf{0}.
	\end{split}
\end{equation}
We call the above the nonnegative constrained $L_1-\tau L_2$ problem (\ref{eq:L1tauL2_nonneg}).

Our strategy of proving the NP-hardness of \eqref{eq:L1tauL2} and \eqref{eq:L1tauL2_nonneg} is a polynomial-time reduction from the classical partition problem, which is known to be NP-hard (see, e.g., \cite{garey1979computers}). 
Given a multiset $S=\{a_1,\ldots,a_m\}$ of integers or rational numbers, the 
\emph{partition problem} is to decide whether there is a partition of $S$ into two subsets $S_1$ and $S_2$ such that the sum of elements in $S_1$ is equal to that in $S_2$. In the following proposition, we describe the partition problem as an optimization problem.

\begin{proposition}
	\label{thm:partition_opt}
	The problem of determining the solvability of the partition problem concerning multiset $S=\{a_1,\ldots,a_m\}$ is equivalent to determining whether the optimal objective value of the following optimization problem is $-m$:
	\begin{equation}
		\label{eq:opt_for_partition}
		\begin{split}
			\min_{\textbf{u},\textbf{v}\in \R^{m}} & \sum_{i=1}^m -u_i^2-v_i^2\\
			\text{subject to} & 
			\begin{aligned}[t]\quad u_i+v_i= &1 \quad\text{ for } i=1,\dots,m
				\\
				\textup{\textbf{a}}^\top (\textup{\textbf{u}} - \textup{\textbf{v}})=& 0
                    \\
                    \textup{\textbf{u}},\textup{\textbf{v}} \ge &  \textup{\textbf{0}
                    }.
			\end{aligned}		
		\end{split}
	\end{equation}	
    Here we denote $\textup{\textbf{a}}:=(a_1,\ldots,a_m)^\top$.
\end{proposition}

\begin{proof}
    Consider a relaxation modification of problem \eqref{eq:opt_for_partition} in which the constraint $\textbf{a}^\top (\textbf{u}-\textbf{v})=0$ is removed:
	\begin{equation}
		\label{eq:opt_for_partition_relaxed}
		\begin{split}
			\min_{\textbf{u},\textbf{v}\in \R^{m}} & \sum_{i=1}^m -u_i^2-v_i^2\\
			\text{subject to} & 
			\begin{aligned}[t]\quad u_i+v_i= &1 \quad\text{ for } i=1,\dots,m
				\\
                    \textbf{u},\textbf{v} \ge &   \textbf{0}.
			\end{aligned}		
		\end{split}
	\end{equation}	
    The optimal value of 
    problem \eqref{eq:opt_for_partition_relaxed} is $-m$ and its set of optimal solutions is
    \begin{equation}
        \label{eq:Xstar_NCP}
        X^*:=\{(\textbf{u}^*,\textbf{v}^*)\in\mathbb{R}^m\times \mathbb{R}^m\vert (u^*_i,v^*_i) = (1,0) \text{ or }(0,1)\text{ for all }i\in [m]\}.
    \end{equation}
    To see this, note that for any feasible solution to 
    problem \eqref{eq:opt_for_partition_relaxed} we have $-(u_i^2+v_i^2)\ge -(u_i+v_i)^2= 1$ due to nonnegativity constraints, in which the inequality becomes equality if and only if $u_iv_i=0$. Since the relaxation is done by removing a constraint in the original problem \eqref{eq:opt_for_partition}, we can also observe that the optimal value of the original problem \eqref{eq:opt_for_partition} is at least $-m$.

    Suppose that there exists a solution to the partition problem with respect to multiset $S = \{a_1,\ldots,a_m\}$. Let us introduce vectors $\textbf{u},\textbf{v}\in\mathbb{R}^m$, in which $u_i=1$ (or $v_i=1$ respectively) if $a_i$ is partitioned into the first subset (or second subset respectively), and $u_i=0$ (or $v_i=0$ respectively) otherwise. Clearly, we have $(\textbf{u},\textbf{v})\in X^*$. Moreover, since the sum of elements in the two subsets are the same, we have $\textbf{a}^\top \textbf{u} = \textbf{a}^\top \textbf{v}$. Therefore $(\textbf{u},\textbf{v})$ is an optimal solution to the relaxed problem \eqref{eq:opt_for_partition_relaxed}, while also feasible for the original problem \eqref{eq:opt_for_partition}. Consequently, $(\textbf{u},\textbf{v})$ must be an optimal solution to the original problem.

    Conversely, for a given partition problem on multiset $S = \{a_1,\ldots,a_m\}$, suppose that $(\textbf{u}^*,\textbf{v}^*)$ is an optimal solution to problem \eqref{eq:opt_for_partition} with optimal value $-m$. Then $(\textbf{u}^*,\textbf{v}^*)$ is also clearly an optimal solution to the relaxed problem \eqref{eq:opt_for_partition_relaxed} and hence $(\textbf{u}^*,\textbf{v}^*)\in X^*$. By partitioning element $a_i$ to the first set if $u_i^*=1$ and to the second set if $v_i^*=1$ for all $i\in[m]$, we obtain a solution to the partition problem. \hfill $\qed$
\end{proof}

The optimization formulation of the partition problem in problem \eqref{eq:opt_for_partition} is not necessarily novel; similar formulations have been studied in computational complexity theory. 
From the complexity analysis point of view, the NP-hardness of the partition problem potentially arises from its discrete nature as described in the proof of Proposition \ref{thm:partition_opt}, in which any optimal solution in the set (\ref{eq:Xstar_NCP}) has binary component values. From the sparse reconstruction point of view, the 
result of Proposition \ref{thm:partition_opt} states that the partition problem is equivalent to a linear reconstruction with nonnegativity constraints and a negative squared Euclidean norm ($-\|\cdot\|_2^2$) regularization. Such linear reconstruction problem yields $m$-sparse solutions as shown in (\ref{eq:Xstar_NCP}): any optimal solution has exactly $m$ non-zero components and $m$ zero components.
Here the nonnegativity constraints are important in enforcing the optimal solutions to the $m$-sparse solution set (\ref{eq:Xstar_NCP}).

From the optimization description of the partition problem in Proposition \ref{thm:partition_opt}, we can prove immediately that solving problem \eqref{eq:L1tauL2_nonneg} is NP-hard.

\begin{theorem}
    \label{thm:NCP}
    For any $\tau>0$, it is NP-hard to solve the nonnegative constrained $L_1-\tau L_2$ problem \eqref{eq:L1tauL2_nonneg}.
\end{theorem}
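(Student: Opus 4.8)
The plan is to give a polynomial-time reduction from the partition problem to \eqref{eq:L1tauL2_nonneg}, building directly on the optimization formulation of \cref{thm:partition_opt}. Given a partition instance with multiset $S=\{a_1,\ldots,a_m\}$ and $a=(a_1,\ldots,a_m)^\top$, I would take $n=2m$, write the decision variable as $x=(u,v)\in\R^m\times\R^m$, and form the \eqref{eq:L1tauL2_nonneg} instance with
\[
A=\begin{pmatrix} I_m & I_m \\ a^\top & -a^\top \end{pmatrix},\qquad b=\begin{pmatrix} \textbf{1}_m \\ 0 \end{pmatrix}.
\]
Together with the nonnegativity constraint $x\ge 0$, the constraints $Ax=b$ read $u_i+v_i=1$ for all $i$ and $a^\top(u-v)=0$, so the feasible set of \eqref{eq:L1tauL2_nonneg} coincides exactly with the feasible set of \eqref{eq:opt_for_partition}. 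Since $A$ and $b$ have integer entries of size polynomial in the input, this is a legitimate polynomial-time reduction.

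The observation that makes the proof immediate is that the $L_1$ term is constant on this feasible set: because $x\ge 0$ and $u_i+v_i=1$ for every $i$, we have $\norm{x}_1=\sum_{i=1}^m (u_i+v_i)=m$. Hence the \eqref{eq:L1tauL2_nonneg} objective becomes $\norm{x}_1-\tau\norm{x}_2 = m-\tau\norm{x}_2$, and since $\tau>0$ and $t\mapsto\sqrt t$ is increasing, minimizing it over the feasible set is equivalent to maximizing $\norm{x}_2^2=\sum_{i=1}^m(u_i^2+v_i^2)$, i.e. to minimizing $\sum_{i=1}^m(-u_i^2-v_i^2)$. This is exactly the objective of \eqref{eq:opt_for_partition}, so the two problems have the same minimizers, and if $p^*$ denotes the optimal value of \eqref{eq:opt_for_partition} then the optimal value of \eqref{eq:L1tauL2_nonneg} equals $m-\tau\sqrt{-p^*}$.

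I would then conclude using \cref{thm:partition_opt}. The partition instance is solvable if and only if $p^*=-m$, which by the displayed relation holds if and only if the optimal value of \eqref{eq:L1tauL2_nonneg} equals $m-\tau\sqrt{m}$; in every ``no'' instance we have $p^*>-m$, hence $\sqrt{-p^*}<\sqrt m$ and the optimal value is strictly larger than $m-\tau\sqrt m$. Therefore any algorithm that computes the optimal value of \eqref{eq:L1tauL2_nonneg} decides the partition problem in polynomial time, and NP-hardness follows.

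The step requiring the most care is not the algebra but the justification that $\norm{x}_1$ is genuinely constant; this is precisely where the nonnegativity constraints are indispensable, since they turn $\norm{x}_1$ into $\sum_i(u_i+v_i)$ and thereby let the nonconvex $-\tau\norm{x}_2$ term collapse onto the $-\norm{x}_2^2$ objective of \cref{thm:partition_opt} simultaneously for all $\tau>0$. A minor point worth recording is that comparing the optimal value with $m-\tau\sqrt m$ remains a well-defined decision even though $\sqrt m$ may be irrational, because the ``no'' instances are separated from the threshold on the correct side.
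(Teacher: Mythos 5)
Your proposal is correct and is essentially identical to the paper's own proof: the same reduction with $A=\begin{pmatrix} I_m & I_m \\ a^\top & -a^\top\end{pmatrix}$, $b=(\mathbf{1}_m^\top,0)^\top$, the same key observation that nonnegativity plus $u_i+v_i=1$ forces $\norm{x}_1=m$ so that minimizing $\norm{x}_1-\tau\norm{x}_2$ collapses to minimizing $-\norm{x}_2^2$, and the same appeal to \cref{thm:partition_opt}. You merely spell out details the paper leaves implicit (the monotonicity argument linking $-\tau\norm{x}_2$ to $-\norm{x}_2^2$ and the threshold value $m-\tau\sqrt{m}$), which are fine.
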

\begin{proof}
    Fixing any $\tau>0$, we will show that there is a polynomial-time reduction from the partition problem to problem \eqref{eq:L1tauL2_nonneg}. Specifically, supposing that we have an instance of the partition problem with multiset $S=\{a_1,\dots,a_m\}$, consider the optimization problem \eqref{eq:L1tauL2_nonneg} in which $n=2m$, $\textbf{x} = (u_1,\ldots,u_m,v_1,\ldots,v_m)^\top$, and 
	$$
	\textbf{A} = \begin{bmatrix}
		\textbf{I}_m & \textbf{I}_m \\
		\textbf{a}^\top & -\textbf{a}^\top
	\end{bmatrix},
	\quad \quad
	\textbf{b}=
	\begin{bmatrix}
		\textbf{1}_m\\
		0
	\end{bmatrix},
	$$
	where $\textbf{I}_m$ is an $m\times m $ identity matrix, $\textbf{1}_m$ is a vector of $m$ ones, and $\textbf{a} := (a_1,\dots,a_m)^\top$. Noting that the constraints $\textbf{Ax}=\textbf{b}$, $\textbf{x}\ge\textbf{0}$ implies that $\textbf{u}+\textbf{v}=\textbf{1}_m$, $\textbf{u},\textbf{v}\ge \textbf{0}$ and hence $\|\textbf{x}\|_1$ becomes a fixed constant $m$, we can observe that the aforementioned instance of problem \eqref{eq:L1tauL2_nonneg} is equivalent to the problem 
    	\begin{align*}
			\min_{\textbf{u},\textbf{v}\in \R^{m}} & m - \sqrt{\sum_{i=1}^m  u_i^2 + v_i^2}\\
			\text{subject to} & 
			\begin{aligned}[t]\quad u_i+v_i= &1 \quad\text{ for } i=1,\dots,m
				\\
				\textbf{a}^\top (\textbf{u} - \textbf{v})=& 0
                    \\
                    \textbf{u},\textbf{v} \ge &  \textbf{0},
			\end{aligned}		
\end{align*}
 which has an identical solution set to problem
    \eqref{eq:opt_for_partition}.
    By Proposition \ref{thm:partition_opt}, solving such \eqref{eq:L1tauL2_nonneg} instance is equivalent to determining the solvability of the partition problem, which is NP-hard. \hfill $\qed$
\end{proof}

From the sparse reconstruction point of view, under the nonnegativity constraint both the negative Euclidean norm and $L_1-\tau L_2$ minimization are able to capture the sparse optimal solutions of the partition problem instances. However, such sparsity capturing is intrinsically related to the NP-hardness of the partition problem, yielding the NP-hardness of the $L_1-\tau L_2$ problem. 

In the optimization formulation of the partition problem, the nonnegativity constraint plays an important role in capturing the sparse optimal solutions. In the sequel, we will prove that the $L_1-\tau L_2$ minimization is able to capture the sparse solutions of the partition problem without requiring the nonnegativity constraints. Specifically, we will consider the following instance of the \eqref{eq:L1tauL2} problem:
\begin{equation}
	\label{eq:L1tauL2_for_partition}
	\begin{split}
		\min_{\textbf{u},\textbf{v}\in \R^{m}} & \sum_{i=1}^m (|u_i|+|v_i|) - \tau\cdot \sqrt{\sum_{i=1}^m u_i^2+v_i^2}\\
		\text{subject to} & 
		\begin{aligned}[t]\quad u_i+v_i= &1 \quad\text{ for } i=1,\dots,m
		\\
		\textbf{a}^\top (\textbf{u} - \textbf{v})=& 0.
	\end{aligned}		
	\end{split}
\end{equation}
Given an instance of the partition problem with multiset $S=\{a_1,\dots,a_m\}$, observe that problem \eqref{eq:L1tauL2_for_partition} is simply the specific case of problem \eqref{eq:L1tauL2} in which $n=2m$, $\textbf{x} = (u_1,\ldots,u_m,v_1,\ldots,v_m)^\top$, and
$$
\textbf{A} = \begin{bmatrix}
		\textbf{I}_m & \textbf{I}_m \\
		\textbf{a}^T & -\textbf{a}^T
	\end{bmatrix},
	\quad \quad
	\textbf{b}=
	\begin{bmatrix}
		\textbf{1}_m\\
		0
	\end{bmatrix}.$$ We will show that solving 
    problem \eqref{eq:L1tauL2_for_partition} is equivalent to solving the partition problem and hence solving the \eqref{eq:L1tauL2} problem is NP-hard.
Similar to the strategy utilized in the proof of Proposition \ref{thm:partition_opt}, we will study the relaxed version
\begin{equation}
	\label{eq:L1tauL2_for_partition_relaxed}
	\begin{split}
		\min_{\textbf{u},\textbf{v}\in \R^{m}} & \sum_{i=1}^m (|u_i|+|v_i|) - \tau\cdot \sqrt{\sum_{i=1}^m u_i^2+v_i^2}\\
		\text{subject to} & 
		\begin{aligned}[t]\quad u_i+v_i= &1 \quad\text{ for } i=1,\dots,m
	\end{aligned}		
	\end{split}
\end{equation}
and show that its set of optimal solutions is $X^*$ defined in \eqref{eq:Xstar_NCP}.

However, unlike the proof of Proposition \ref{thm:partition_opt} in which the set of optimal solutions $X^*$ can be derived easily, since we no longer have the nonnegativity constraints, some additional arguments are necessary to derive the set of optimal solutions to the 
relaxed problem \eqref{eq:L1tauL2_for_partition_relaxed}. The analysis is summarized in the following lemma.

\begin{lemma}
    \label{thm:CP_opt}
    For any fixed $\tau \in (0,\sqrt{2})$, the set of optimal solutions to problem \eqref{eq:L1tauL2_for_partition_relaxed} is $X^*$ defined in \eqref{eq:Xstar_NCP}.
\end{lemma}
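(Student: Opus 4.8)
The plan is to establish two things at once: that every feasible point of the relaxed problem \eqref{eq:L1tauL2_for_partition_relaxed} has objective value at least $m-\tau\norm{\cdot}$-free constant $m-\tau\sqrt m$, and that equality holds exactly on the set $X^*$ from \eqref{eq:Xstar_NCP}. The single constraint $u_i+v_i=1$ decouples the problem across the index $i$ \emph{except} for the coupling introduced by the shared Euclidean norm in the subtracted term; this coupling is the main obstacle, because it prevents a naive coordinatewise minimization. To neutralize it, I would first bound the $\ell_2$ contribution by the $\ell_1$ contributions coordinatewise, and only afterwards handle the remaining scalar optimization.

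For each $i$ set $p_i:=|u_i|+|v_i|$ and $q_i:=u_i^2+v_i^2$. Using $u_i+v_i=1$ one gets $u_iv_i=(1-q_i)/2$ and $p_i^2=q_i+2|u_iv_i|$, so a short case split on the sign of $1-q_i$ yields $p_i\ge 1$ together with the key coordinatewise bound $q_i\le (1+p_i^2)/2$; moreover, in the tight case $p_i=1$ one is forced to have $q_i\le 1$, and $q_i=1$ then forces $(u_i,v_i)\in\{(1,0),(0,1)\}$. Summing the bound over $i$ gives $\sum_i q_i \le (m+\sum_i p_i^2)/2$, so the objective of \eqref{eq:L1tauL2_for_partition_relaxed} is bounded below by the decoupled surrogate
\[
F(p):=\sum_{i=1}^m p_i-\frac{\tau}{\sqrt2}\sqrt{m+\sum_{i=1}^m p_i^2},\qquad p\ge \textbf{1}_m .
\]

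It remains to prove $F(p)\ge m-\tau\sqrt m$ on $\{p\ge\textbf{1}_m\}$ with equality only at $p=\textbf{1}_m$. Writing $p_i=1+\delta_i$ with $\delta_i\ge 0$ and setting $D:=\sum_i\delta_i$, I would isolate the square root (both sides remain positive since $\tau>0$) and square, reducing the claim to $D^2+\tau(2\sqrt m-\tau)D-\frac{\tau^2}{2}\sum_i\delta_i^2\ge 0$. Using the elementary estimate $\sum_i\delta_i^2\le(\sum_i\delta_i)^2=D^2$, it suffices to verify $(1-\tfrac{\tau^2}{2})D^2+\tau(2\sqrt m-\tau)D\ge 0$. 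This is exactly where the hypothesis enters: $1-\tau^2/2>0$ precisely because $\tau<\sqrt2$, and $\tau(2\sqrt m-\tau)>0$ because $\tau<\sqrt2<2\le 2\sqrt m$; hence the expression is nonnegative for $D\ge 0$ and vanishes only at $D=0$, i.e.\ at $p=\textbf{1}_m$. (Only the upper bound $\tau<\sqrt2$ is used in this argument; the lower bound $1/\sqrt2$ is not needed for this lemma.)

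Finally I would assemble the equality case. The objective equals $m-\tau\sqrt m$ only if both the coordinatewise bounds $q_i\le(1+p_i^2)/2$ and the scalar bound $F(p)\ge m-\tau\sqrt m$ are tight; the latter forces $p=\textbf{1}_m$, and then tightness of the former forces $q_i=1$ for every $i$, which together with $u_i+v_i=1$ gives $u_iv_i=0$ and hence $(u_i,v_i)\in\{(1,0),(0,1)\}$, i.e.\ membership in $X^*$. Conversely, every point of $X^*$ attains the value $m-\tau\sqrt m$, so $X^*$ is precisely the optimal set. The one step demanding genuine care is the passage from the coupled objective to $F(p)$: because of the shared $\norm{\cdot}_2$ term it is tempting but wrong to argue separability directly, and the coordinatewise inequality $q_i\le(1+p_i^2)/2$ is the device that makes the coupling harmless while preserving a sharp equality characterization.
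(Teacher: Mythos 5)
Your proposal is correct, and it proves the lemma by a genuinely different and more elementary route than the paper. The paper first reduces, via a sign-pattern symmetry argument, to the sign-constrained problem \eqref{eq:L1tauL2_for_partition_relaxed_analysis}, then substitutes $v_i=-w_i^2$ to obtain the smooth function $g(w)=2\norm{w}_2^2-\tau\sqrt{m+2\norm{w}_2^2+2\norm{w}_4^4}$, proves coercivity (using $\tau<\sqrt2$) to guarantee a minimizer exists, and finally solves $\nabla g(w)=0$, where the hypothesis $\tau\ge 1/\sqrt{2}$ is genuinely consumed to rule out the nonzero stationary branch $2\tau\sqrt{m+2\norm{w}_2^2+2\norm{w}_4^4}=1+2w_i^2$. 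You bypass all of this: from $u_i+v_i=1$ you get $u_iv_i=(1-q_i)/2$ and the identity $p_i^2=q_i+|1-q_i|$, hence $p_i\ge1$ and $q_i\le(1+p_i^2)/2$, which decouples the shared norm and reduces everything to the surrogate $F(p)$; the squaring step together with $\sum_i\delta_i^2\le D^2$ then gives $F(p)\ge m-\tau\sqrt m$ with equality only at $p=\mathbf{1}_m$. I verified the algebra ($m+\sum_i p_i^2=2m+2D+\sum_i\delta_i^2$, leading to $D^2+\tau(2\sqrt m-\tau)D-\tfrac{\tau^2}{2}\sum_i\delta_i^2\ge0$, and $1-\tau^2/2>0$, $2\sqrt m-\tau>0$ under the hypotheses) and the equality bookkeeping ($p_i=1$ forces $u_i,v_i\ge0$ by the triangle inequality, and $q_i=1$ then forces $u_iv_i=0$, i.e.\ $(u_i,v_i)\in\{(1,0),(0,1)\}$), and both are sound. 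Comparing the two: your argument needs no existence/coercivity step, no calculus, and no enumeration of sign cases, and its equality characterization is fully transparent, while the paper's stationarity method is more mechanical and would adapt more readily to other smooth reparametrizations. The real dividend of your route is the one you flag: only $0<\tau<\sqrt2$ is used (strict positivity of $\tau$ enters where equality in the objective-versus-$F(p)$ bound is passed back to $q_i=(1+p_i^2)/2$, and $\tau\ge1/\sqrt2$ guarantees this), so your proof establishes the lemma, and hence the NP-hardness in \cref{thm:CP}, for every $\tau\in(0,\sqrt2)$, strictly extending the paper's range $[1/\sqrt2,\sqrt2)$ and showing that the lower endpoint there is an artifact of the paper's stationary-point technique. (The phrase ``$m-\tau\norm{\cdot}$-free constant'' in your opening sentence is garbled, but the intended bound $m-\tau\sqrt m$ is clear and the mathematics is unaffected.)
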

\begin{proof}
    We make the following claim: for any integer
    $k=0,\dots,m$, there always exist optimal solutions to the problem
    \begin{equation}
	\label{eq:L1tauL2_for_partition_relaxed_analysis}
	\begin{split}
		\min_{\textbf{u},\textbf{v}\in \R^{m}} & \sum_{i=1}^m (|u_i|+|v_i|) - \tau\cdot \sqrt{\sum_{i=1}^m u_i^2+v_i^2}\\
		\text{subject to} & 
		\begin{aligned}[t]\quad & u_i+v_i= 1 \quad\text{ for } i=1,\dots,m,
            \\
            & \textbf{u}\ge \textbf{0},\ v_1,\ldots,v_k\le 0,\ v_{k+1},\ldots,v_{m}\ge 0.
	\end{aligned}		
	\end{split}
    \end{equation}
    Note when $k=0$ or $k=m$, we interpret equation \eqref{eq:L1tauL2_for_partition_relaxed_analysis} as the case with $\textbf{v}\geq \textbf{0}$ or $\textbf{v}\leq \textbf{0}$, respectively.
    For any feasible solution to \eqref{eq:L1tauL2_for_partition_relaxed} for each $i$ either both $u_i$ and $v_i$ are nonnegative, or exactly one is positive and one is negative since $u_i + v_i = 1$. Without loss of generality, assume for every $i$ that $u_i$ is nonnegative. Thus we may instead study \eqref{eq:L1tauL2_for_partition_relaxed_analysis} for every integer $k=0,\dots, m$ in place of \eqref{eq:L1tauL2_for_partition_relaxed}. Moreover, any optimal solution  $(\textbf{u}^*,\textbf{v}^*)$ to 
    \eqref{eq:L1tauL2_for_partition_relaxed_analysis} must satisfy $v_{1}^*=\cdots=v_k^*=0$. Based on the claim, it is straightforward to observe that the optimization problem \eqref{eq:L1tauL2_for_partition_relaxed} is equivalent to the following one with additional nonnegative constraints: 
\begin{equation}\label{eq:lemma1_eqn_nonnegative_constrained}
        \begin{split}
		\min_{\textbf{u},\textbf{v}\in \R^{m}} & \sum_{i=1}^m (|u_i|+|v_i|) - \tau\cdot \sqrt{\sum_{i=1}^m u_i^2+v_i^2}\\
		\text{subject to} & 
		\begin{aligned}[t]\quad & u_i+v_i= 1 \quad\text{ for } i=1,\dots,m
            \\
            & \textbf{u}, \textbf{v}\ge 0.
	\end{aligned}		
	\end{split}
    \end{equation}
We conclude the proof by observing that the  
nonnegative constrained problem \eqref{eq:lemma1_eqn_nonnegative_constrained} is equivalent to the optimization problem \eqref{eq:opt_for_partition_relaxed}, whose set of optimal solutions is $X^*$.

    To finish the proof it suffices to prove the claim. Observe in problem \eqref{eq:L1tauL2_for_partition_relaxed_analysis} that for any feasible solution $(\textbf{u},\textbf{v})$,
    \begin{align*}
    \begin{split}
    & \sum_{i=1}^m (|u_i|+|v_i|) - \tau\cdot \sqrt{\sum_{i=1}^m u_i^2+v_i^2} 
    \\
     \geq & \sum_{i=1}^k (1-v_i - v_i) + \sum_{i=k+1}^{m} (u_i+v_i) - \tau\cdot \sqrt{\sum_{i=1}^k ((1-v_i)^2+v_i^2) + \sum_{i=k+1}^m (u_i+v_i)^2}
    \\
    = & m - 2\sum_{i=1}^{k}v_i - \tau \sqrt{m-k + \sum_{i=1}^{k} ((1-v_i)^2+v_i^2)},
    \end{split}
    \end{align*}
    in which the inequality becomes equality if and only if $(u_i,v_i)=(0,1)$ or $(1,0)$ for all 
    $i=k+1,\dots,m$. Letting $v_i=-w_i^2$ for all $i=1,\ldots,k$, based on the above observation it suffices to study the problem $\min_{\textbf{w}\in\R^k} g(\textbf{w})$, in which
    \begin{align*}
    \begin{split}
        g(\textbf{w}):=& 2\sum_{i=1}^{k}w_i^2 - \tau \sqrt{m-k + \sum_{i=1}^{k} ((1+w_i^2)^2+w_i^4)}
        \\
        = & 2\|\textbf{w}\|_2^2 - \tau\sqrt{m+2\|\textbf{w}\|_2^2 + 2\|\textbf{w}\|_4^4}.
    \end{split}
    \end{align*}
    In fact, to prove the claim it suffices to show that the function $g(\textbf{w})$ has a unique minimizer $\textbf{w}^*=0$.

    We will first show that the continuous function $g(\textbf{w})$ has a minimizer by proving that it is a coercive function. See, e.g., Corollary 2.5 in \cite{guler2010foundations} for the relationship between coercivity and existence of global optimal solutions. Observe that
    \begin{align*}
        g(\textbf{w}) \ge & 2\|\textbf{w}\|_2^2 - \tau\left(\sqrt{m} + \sqrt{2}\|\textbf{w}\|_2 + \sqrt{2}\sqrt{\sum_{i=1}^m w_i^4}\right)
        \\
        \ge & 2\|\textbf{w}\|_2^2 - \tau\left(\sqrt{m} + \sqrt{2}\|\textbf{w}\|_2 + \sqrt{2}\sum_{i=1}^m w_i^2\right)
        \\
        = & (2-\sqrt{2}\tau)\|\textbf{w}\|_2^2 - \sqrt{m}\tau - \sqrt{2}\tau\|\textbf{w}\|_2.
    \end{align*}
Since $\tau<\sqrt{2}$, for any sequence $\{\textbf{w}^{(j)}\}_{j=1}^\infty$ such that $\|\textbf{w}^{(j)}\|_2\to \infty$, we have $g(\textbf{w}^{(j)})\to \infty$ also. Hence $g$ is a coercive function. Consequently, $g$ is also continuous the problem $\min_{\textbf{w}}g(\textbf{w})$ has at least one global optimizer. 

Since the function $g$ is differentiable, we may compute its minimizers by studying its stationary points that solve the system $\nabla g(\textbf{w}) = 0$. Here the $i$-th equation is 
$$
    4w_i - \tau\cdot \frac{4w_i + 8w_i^3}{2\sqrt{m + 2\|\textbf{w}\|_2^2 +  2\|\textbf{w}\|_4^4}} = 0,
$$
which is equivalent to either $w_i=0$ or
$$
     \frac{2}{\tau} \sqrt{m + 2\|\textbf{w}\|_2^2 +  2\|\textbf{w}\|_4^4} - 1 - 2w_i^2=0.
$$
However, for any
$\tau \in (0,\sqrt{2})$, the latter has no feasible solution since
\begin{align*}
    \frac{2}{\tau} \sqrt{m + 2\|\textbf{w}\|_2^2 +  2\|\textbf{w}\|_4^4} &\ge \sqrt{2}\cdot\sqrt{1 + 2w_i^2 + 2w_i^4} \\
    &> \sqrt{1 + 4w_i^2 + 4w_i^4} \\
    &= 1+2w_i^2.
\end{align*}
So the only stationary point is $\textbf{w}=0$ which is the global optimizer. Our claim is now proved. \hfill $\qed$
\end{proof}

With the help of Lemma \ref{thm:CP_opt}, we are now ready to prove that solving problem \eqref{eq:L1tauL2} is NP-hard by following the same argument in the NP-hardness proof of \eqref{eq:L1tauL2_nonneg}.

\begin{theorem}
    \label{thm:CP}
    For any 
    $\tau \in (0,\sqrt{2})$, it is NP-hard to solve the constrained $L_1-\tau L_2$ problem \eqref{eq:L1tauL2}.
\end{theorem}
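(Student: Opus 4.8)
The plan is to mirror exactly the reduction strategy already used for Theorem~\ref{thm:NCP}, now invoking Lemma~\ref{thm:CP_opt} in place of the elementary observation that was available in the nonnegative case. Given an instance of the partition problem with multiset $S=\{a_1,\dots,a_m\}$, I would construct the very same data as in the proof of Theorem~\ref{thm:NCP}: set $n=2m$, write $x=(u_1,\dots,u_m,v_1,\dots,v_m)^\top$, and take
\[
A = \begin{bmatrix} I_m & I_m \\ a^\top & -a^\top \end{bmatrix},
\qquad
b = \begin{bmatrix} \textbf{1}_m \\ 0 \end{bmatrix},
\]
but now impose only the linear constraint $Ax=b$ without any sign constraint, so that this is a genuine instance of \eqref{eq:L1tauL2}. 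The constraint $Ax=b$ unpacks to $u_i+v_i=1$ for all $i$ together with $a^\top(u-v)=0$, which is precisely problem \eqref{eq:L1tauL2_for_partition}.

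The heart of the argument is to show that the optimal value of \eqref{eq:L1tauL2_for_partition} equals $-\tau\sqrt{m}$ if and only if the partition instance is solvable, and that this value cannot be beaten. First I would note that \eqref{eq:L1tauL2_for_partition} is \eqref{eq:L1tauL2_for_partition_relaxed} with the extra linear constraint $a^\top(u-v)=0$ adjoined; removing that constraint can only decrease the optimal value, so the optimal value of \eqref{eq:L1tauL2_for_partition} is at least that of \eqref{eq:L1tauL2_for_partition_relaxed}. By Lemma~\ref{thm:CP_opt}, for $\tau\in[1/\sqrt{2},\sqrt{2})$ the set of optimal solutions to the relaxed problem is exactly $X^*$ from \eqref{eq:Xstar_NCP}, on which the objective takes the value $m - \tau\sqrt{m}$ (each $(u_i,v_i)$ is $(1,0)$ or $(0,1)$, so $\|x\|_1=m$ and $\|x\|_2^2=m$). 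Then I would argue as in Proposition~\ref{thm:partition_opt}: if the partition is solvable, the associated point of $X^*$ also satisfies $a^\top(u-v)=0$ and hence is feasible for \eqref{eq:L1tauL2_for_partition} while attaining the relaxed optimum, so it is optimal and the value is $m-\tau\sqrt{m}$; conversely, any optimizer of \eqref{eq:L1tauL2_for_partition} achieving this value must lie in $X^*$ (being optimal for the relaxation too) and, satisfying $a^\top(u-v)=0$, yields an equal partition.

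Finally, determining whether the optimal value of this \eqref{eq:L1tauL2} instance equals $m-\tau\sqrt{m}$ decides the partition instance, and since the construction of $A$ and $b$ is clearly polynomial in the input size of $S$, this furnishes a polynomial-time reduction from the (NP-hard) partition problem to \eqref{eq:L1tauL2}. I expect the only real subtlety to be the careful bookkeeping in the equivalence chain --- namely arguing that optimality for \eqref{eq:L1tauL2_for_partition} forces membership in $X^*$ --- but this is handled cleanly by the standard relaxation argument combined with Lemma~\ref{thm:CP_opt}, so the heavy lifting has already been done in the lemma. The remaining steps are essentially a verbatim repetition of the reasoning in the proofs of Proposition~\ref{thm:partition_opt} and Theorem~\ref{thm:NCP}, with the restriction $\tau\in[1/\sqrt{2},\sqrt{2})$ inherited directly from the hypothesis of Lemma~\ref{thm:CP_opt}.
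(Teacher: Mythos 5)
Your proposal is correct and takes essentially the same route as the paper's own proof: the identical reduction instance $(A,b)$ with $n=2m$, Lemma~\ref{thm:CP_opt} pinning the optimal set of the relaxed problem \eqref{eq:L1tauL2_for_partition_relaxed} to $X^*$, and the Proposition~\ref{thm:partition_opt}-style relaxation argument showing the optimal value of \eqref{eq:L1tauL2_for_partition} equals $m-\tau\sqrt{m}$ precisely when the partition instance is solvable. The lone blemish is the isolated typo ``$-\tau\sqrt{m}$'' for the target value in one sentence, which your own computation ($\|x\|_1=m$, $\|x\|_2=\sqrt{m}$) corrects to $m-\tau\sqrt{m}$ everywhere it matters.
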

\begin{proof}
    Since the set of optimal solutions to problem \eqref{eq:L1tauL2_for_partition_relaxed} is also $X^*$ defined in \eqref{eq:Xstar_NCP}, by the same argument as in the proof of Proposition \ref{thm:partition_opt}, we can prove that the problem of determining the solvability of the partition problem concerning multiset $S=\{a_1,\ldots,a_m\}$ is equivalent to determining whether the optimal objective value of problem \eqref{eq:L1tauL2_for_partition} is $m-\tau\sqrt
    m$. Here the value $m-\tau\sqrt
    m$ is the optimal objective value of problem \eqref{eq:L1tauL2_for_partition_relaxed} in the optimal solution set $X^*$. Moreover, note that problem \eqref{eq:L1tauL2_for_partition} is an instance of \eqref{eq:L1tauL2} with $n=2m$, $\textbf{x} = (u_1,\ldots,u_m,v_1,\ldots,v_m)^\top$, and 
	$$
	\textbf{A} = \begin{bmatrix}
		\textbf{I}_m & \textbf{I}_m \\
		\textbf{a}^\top & -\textbf{a}^\top
	\end{bmatrix},
	\quad \quad
	\textbf{b}=
	\begin{bmatrix}
		\textbf{1}_m\\
		0
	\end{bmatrix}.
	$$
    Therefore, solving such \eqref{eq:L1tauL2} instance is equivalent to determining the solvability of the partition problem, which is NP-hard. \hfill $\qed$
\end{proof}

Our constructed hard instance \eqref{eq:L1tauL2_for_partition} suggests that the constrained $L_1-\tau L_2$ minimization captures the sparse solutions of the partition problem and hence reflects its NP-hardness. An  immediate research question is whether we can reformulate the linear reconstruction problem to an unconstrained version instead and control the reconstruction sparsity by tuning a parameter in the regularization path. However, as we will show in the next section, the unconstrained $L_1-L_2$ minimization problem will also capture the sparse solutions of the partition problem.

\section{NP-Hardness of Unconstrained $L_1-L_2$ Minimization}
\label{unconstrained.section}

In this section, we prove that it is also NP-hard to solve the unconstrained $L_1-L_2$ minimization problem (\ref{unconstrainedequation}). Recall the unconstrained $L_1-L_2$ minimization problem (\ref{unconstrainedequation}) is the problem
\begin{equation}\label{eq:UP}\tag{UP}
    \begin{split}
    \min_{\textbf{x}\in \R^{n}} &\norm{\textbf{Ax}-\textbf{b}}_2^2 + \lambda(\norm{\textbf{x}}_1 - \norm{\textbf{x}}_2) 
    \end{split}
 \end{equation}
 for fixed penalty parameter $
 \lambda > 0$. Throughout this section we will refer to the above as the unconstrained $L_1-L_2$ problem (\ref{eq:UP}). We will also use (\ref{eq:NUP}) to denote the nonnegativity constrained version of problem (\ref{eq:UP}):
\begin{equation}\label{eq:NUP}\tag{NUP}
    \begin{split}
    \min_{\textbf{x}\in \R^{n}} & \norm{\textbf{Ax}-\textbf{b}}_2^2 + \lambda(\norm{\textbf{x}}_1 - \norm{\textbf{x}}_2) \\
    &\text{subject to } \textbf{x}\geq \textbf{0}.
    \end{split}
 \end{equation}
 By utilizing the linear reconstruction instance \eqref{eq:L1tauL2_for_partition}, we will show in this section that both (\ref{eq:UP}) and (\ref{eq:NUP}) are NP-hard.
 Similar to our analysis in the constrained case (\ref{eq:L1tauL2}) and (\ref{eq:L1tauL2_nonneg}), our analysis is based on showing that the $L_1-L_2$ regularization captures the sparse solutions of the partition problem and hence reflects its NP-hardness.

Specifically, we will consider \eqref{eq:UP} and \eqref{eq:NUP} instances
	\begin{equation}
		\label{eq:opt_for_partition3}
		\begin{split}
			  \min_{\textbf{u},\textbf{v}\in \R^m}  f(\textbf{u},\textbf{v}):=& (\textbf{a}^\top(\textbf{u}-\textbf{v}))^2+\sum_{i=1}^{m} (u_i+v_i -1)^2 
     \\
     & + \lambda\left[\sum_{i=1}^{m}|u_i|+|v_i| - \sqrt{\sum_{i=1}^m u_i^2 + v_i^2}\right],
		\end{split}
	\end{equation}	

\begin{equation}
    \label{eq:opt_for_partition2}
    \begin{split}
          \min_{\textbf{u},\textbf{v}\in \R^m} f(\textbf{u},\textbf{v}):=& (\textbf{a}^\top(\textbf{u}-\textbf{v}))^2+\sum_{i=1}^{m} (u_i+v_i -1)^2 
          \\
          & + \lambda\left[\sum_{i=1}^{m}u_i+v_i - \sqrt{\sum_{i=1}^m u_i^2 + v_i^2}\right]\\
  \text{subject to }  & \textbf{u},\textbf{v}\ge \textbf{0}.
    \end{split}
\end{equation}	
We will show that for any fixed regularization parameter $\lambda\in (0,2)$, solving either one of the instances (\ref{eq:opt_for_partition3}) or  (\ref{eq:opt_for_partition2}) is NP-hard.

Before conducting our complexity analysis, we will first need to make sure that both problems (\ref{eq:opt_for_partition3}) and (\ref{eq:opt_for_partition2}) are solvable. The existence of their optimal solutions are shown in Proposition \ref{prop31}.

\begin{proposition}\label{prop31}
    For any regularization parameter $\lambda>0$, there exists at least one optimal solution to
	problem
     (\ref{eq:opt_for_partition3}).
\end{proposition}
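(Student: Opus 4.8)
The plan is to invoke the standard fact (Corollary~2.5 in \cite{guler2010foundations}, already used in the proof of \cref{thm:CP_opt}) that a continuous coercive function on $\R^{2m}$ attains its infimum, so that it suffices to verify that the objective $f$ in \cref{eq:opt_for_partition3} is continuous and coercive. Continuity is immediate, since $f$ is assembled from sums, squares, absolute values, and the continuous Euclidean norm. The whole difficulty is coercivity: I must show that $f(u,v)\to\infty$ whenever $\norm{(u,v)}_2\to\infty$.

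The main obstacle is that neither group of terms is coercive on its own, so coercivity has to be extracted from their interaction. Writing $z=(u,v)$, $A=\begin{bmatrix}I_m & I_m\\ a^\top & -a^\top\end{bmatrix}$ and $b=(\mathbf{1}_m,0)^\top$, the data term equals $\norm{Az-b}_2^2$, which is constant ($=\norm{b}_2^2$) along the nontrivial null space $\ker A=\{(u,-u):a^\top u=0\}$; meanwhile the penalty $\norm{z}_1-\norm{z}_2\ge 0$ vanishes identically along every $1$-sparse direction. Thus I expect the orthogonal decomposition of $\R^{2m}$ into $\ker A$ and $(\ker A)^\perp$ to be the crux of the argument.

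Concretely, I would decompose $z=z_0+z_1$ with $z_0\in\ker A$ and $z_1\in(\ker A)^\perp$. On $(\ker A)^\perp$ the map $A$ is injective, so with $\sigma>0$ the smallest positive singular value of $A$ one has $\norm{Az}_2=\norm{Az_1}_2\ge\sigma\norm{z_1}_2$; combining this with Cauchy--Schwarz (equivalently, the reverse triangle inequality) and the elementary bound $(p-q)^2\ge\tfrac12 p^2-q^2$ gives $\norm{Az-b}_2^2\ge\tfrac{\sigma^2}{2}\norm{z_1}_2^2-\norm{b}_2^2$, i.e.\ quadratic growth in $\norm{z_1}_2$. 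For the penalty, since $z_0=(u,-u)$ I compute $\norm{z_0}_1-\norm{z_0}_2=2\norm{u}_1-\sqrt 2\norm{u}_2\ge(2-\sqrt 2)\norm{u}_2=(\sqrt 2-1)\norm{z_0}_2$, using only $\norm{u}_1\ge\norm{u}_2$ (so the constraint $a^\top u=0$ is not even needed here); hence the penalty grows linearly along $\ker A$. The cross terms are then separated by the triangle inequalities $\norm{z}_1\ge\norm{z_0}_1-\norm{z_1}_1$ and $\norm{z}_2\le\norm{z_0}_2+\norm{z_1}_2$, together with $\norm{z_1}_1\le\sqrt{2m}\,\norm{z_1}_2$.

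Combining these estimates, I obtain a lower bound of the form
\[
f(z)\ \ge\ \Bigl(\tfrac{\sigma^2}{2}\norm{z_1}_2^2-\lambda(\sqrt{2m}+1)\norm{z_1}_2\Bigr)+\lambda(\sqrt 2-1)\norm{z_0}_2-\norm{b}_2^2,
\]
in which the bracketed expression is bounded below and tends to $+\infty$ as $\norm{z_1}_2\to\infty$, while the last displayed term tends to $+\infty$ as $\norm{z_0}_2\to\infty$ (note $\sqrt2-1>0$ and $\lambda>0$, so this works for every $\lambda>0$, matching the statement). Since $\norm{z}_2^2=\norm{z_0}_2^2+\norm{z_1}_2^2$, any sequence with $\norm{z}_2\to\infty$ forces $\norm{z_0}_2\to\infty$ or $\norm{z_1}_2\to\infty$ along a subsequence, and in either case the right-hand side diverges; thus $f$ is coercive and a minimizer exists. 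I expect the only delicate point to be this final domination argument reconciling the quadratic-in-$z_1$ and linear-in-$z_0$ growth, which I would settle by observing that each summand is separately bounded below and coercive in its own variable.
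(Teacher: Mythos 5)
Your proposal is correct, but it takes a genuinely different route from the paper's. The paper never proves coercivity: it shows directly that any point with $\max\{|u_j|,|v_j|\}>1+\sqrt{m}+2m/\lambda$ satisfies $f(u,v)>f(0,0)=m$, via a four-case analysis in which either a single fitting term $(u_j+v_j-1)^2$ already exceeds $m$ (one coordinate large, the other small) or the single-pair penalty $\lambda\bigl(|u_j|+|v_j|-\sqrt{u_j^2+v_j^2}\bigr)=2\lambda|u_j||v_j|/\bigl(|u_j|+|v_j|+\sqrt{u_j^2+v_j^2}\bigr)$ does (both large); minimization is thus confined to a compact $\ell_\infty$-ball and Weierstrass applies. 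You instead establish the strictly stronger statement that $f$ is coercive, through the orthogonal splitting $z=z_0+z_1$ along $\ker A=\{(u,-u):a^\top u=0\}$: quadratic growth transverse to the kernel via the smallest positive singular value (your estimates $\norm{Az-b}_2^2\ge\tfrac{\sigma^2}{2}\norm{z_1}_2^2-\norm{b}_2^2$ and $(p-q)^2\ge\tfrac12 p^2-q^2$ both check out, the latter being $\tfrac12(p-2q)^2\ge0$), linear growth of the penalty on the kernel via $\norm{z_0}_1-\norm{z_0}_2\ge(\sqrt2-1)\norm{z_0}_2$ (kernel vectors $(u,-u)$ are never $1$-sparse), and cross terms absorbed by triangle inequalities together with $\norm{z_1}_1\le\sqrt{2m}\,\norm{z_1}_2$; the closing domination step is sound because $\norm{z}_2^2=\norm{z_0}_2^2+\norm{z_1}_2^2$ forces $\max\{\norm{z_0}_2,\norm{z_1}_2\}\ge\norm{z}_2/\sqrt2$, and each summand in your lower bound is bounded below and coercive in its own variable. (Your parenthetical attributing the reverse triangle inequality to Cauchy--Schwarz is a harmless misnomer.) As for what each approach buys: the paper's argument is elementary, needs no spectral data, and produces an explicit localization radius; yours isolates the structural reason existence holds --- $\ker A$ contains no one-sparse directions, so the penalty is uniformly positive on the kernel sphere --- mirrors the coercivity technique the paper already uses for $g(w)$ in \cref{thm:CP_opt}, and generalizes verbatim to any data $(A,b)$ whose kernel avoids the set where $\norm{z}_1=\norm{z}_2$. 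Both arguments restrict immediately to the nonnegative orthant (coercivity on $\R^{2m}$ passes to any closed subset), so either one also yields \cref{existence.corollary}.
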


\begin{proof}
    We claim that if 
    $\max\{|u_j|,|v_j|\}>1+\sqrt{m}+2m/\lambda$ for any index $j$, then $f(\textbf{u},\textbf{v})>f(\textbf{0},\textbf{0})$. By our claim, we conclude that it suffices to study the minimization of $f(u,v)$ within the compact set $\|(\textbf{u},\textbf{v})\|_\infty \le 1+\sqrt{m}+2m/\lambda$. By Weierstrass theorem, the optimal solution of such problem must exist.

    To prove the claim, note that if 
     $\max\{|u_j|,|v_j|\}>1+\sqrt{m}+2m/\lambda$ for some index $j$, then there are exactly four possible cases:
\begin{enumerate}[label=(\arabic*)]
    \item $|u_j|>1+\sqrt{m}+2m/\lambda$ and $|v_j|\le 2m/\lambda$;
    \item $|v_j|>1+\sqrt{m}+2m/\lambda$ and $|u_j|\le 2m/\lambda$;
    \item $|u_j|>1+\sqrt{m}+2m/\lambda$ and $|v_j|> 2m/\lambda$;
    \item $|v_j|>1+\sqrt{m}+2m/\lambda$ and $|u_j|> 2m/\lambda$.
\end{enumerate}
    In the first case, we have
$
    |u_j + v_j - 1| \ge |u_j-1| - |v_j| \ge |u_j| - 1 - |v_j|>\sqrt{m}.
$
Noting that the $L_1-L_2$ regularization function is nonnegative, we have $f(\textbf{u},\textbf{v})\ge (u_j+v_j-1)^2>m = f(\textbf{0},\textbf{0})$.
A similar result also holds for the second case. 
In the third case, we have 
$|u_j|> 1 +\sqrt{m} + 2m/\lambda >2m/\lambda$ and $|v_j|>2m/\lambda$. Focusing on the regularization part we have 
\begin{align*}
\begin{split}
    f(\textbf{u},\textbf{v}) \ge & \lambda \left[\sum_{i=1}^{m}|u_i|+|v_i| - \sqrt{\left( \sqrt{u_j^2 + v_j^2} + \sum_{i=1,i\not=j}^m |u_i| + |v_i|\right)^2}\right] 
    \\
    = & \lambda(|u_j| + |v_j| - \sqrt{u_j^2 + v_j^2}).
\end{split}
\end{align*}
Observing that
\begin{align*}
&(|u_j| + |v_j| - \sqrt{u_j^2 + v_j^2})\cdot \frac{|u_j| + |v_j| + \sqrt{u_j^2 + v_j^2}}{|u_j| + |v_j| + \sqrt{u_j^2 + v_j^2}} = \frac{(|u_j| + |v_j|)^2 - u_j^2 - v_j^2}{|u_j| + |v_j| + \sqrt{u_j^2 + v_j^2}}
\\
=& \frac{2|u_j||v_j|}{|u_j| + |v_j| + \sqrt{u_j^2 + v_j^2}}
\end{align*}
and
$$
|u_j| + |v_j| + \sqrt{u_j^2 + v_j^2} \leq 2(|u_j| + |v_j|),
$$
we then have
\begin{align*}
f(\textbf{u},\textbf{v}) \geq & \frac{2\lambda |u_j||v_j|}{|u_j| + |v_j| + \sqrt{u_j^2 + v_j^2}}
    \ge  \frac{\lambda|u_j||v_j|}{|u_j|+|v_j|} 
    \\
    \ge &  \frac{\lambda\min\{|u_j|,|v_j|\}\max\{|u_j|,|v_j|\}}{2\max\{|u_j|,|v_j|\}}
    \\
    \ge & \frac{\lambda}{2}\min\{|u_j|,|v_j|\} >  m = f(\textbf{0},\textbf{0}).
\end{align*}
In the fourth case, we have $|u_j| >2m/\lambda$ and $|v_j|>1 +\sqrt{m} + 2m/\lambda > 2m/\lambda$, in which the argument is identical to the third case.
\hfill $\qed$
\end{proof}

We remark that the proof for existence of optimal solutions to problem (\ref{eq:opt_for_partition3}) extends straightforwardly to the version of the problem with nonnegative variables. This is since our proof strategy is based on restricting the optimal solutions to a compact infinity-norm ball, which remains compact when intersected with nonnegativity constraints. 
We formally state this result 
as Corollary \ref{existence.corollary}.
\begin{corollary}\label{existence.corollary}
    There exists at least one optimal solution to problem (\ref{eq:opt_for_partition2}).
\end{corollary}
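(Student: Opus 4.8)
The plan is to mirror the argument used in the proof of Proposition~\ref{prop31}, exploiting the observation that on the nonnegative orthant the objective $f$ of problem~\eqref{eq:opt_for_partition2} coincides with the objective of problem~\eqref{eq:opt_for_partition3}: whenever $u,v\ge 0$ we have $|u_i|=u_i$ and $|v_i|=v_i$, so the two objectives agree on the feasible set of \eqref{eq:opt_for_partition2}. First I would record that this feasible set is nonempty, since it contains the origin, and that $f(0,0)=m$.

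Next I would invoke the key claim established in the proof of Proposition~\ref{prop31}: if $\max\{u_j,v_j\}>1+\sqrt{m}+2m/\lambda$ for some index $j$, then $f(u,v)>f(0,0)=m$. The four cases analyzed there depend only on the magnitudes $|u_j|,|v_j|$ together with the nonnegativity of the $L_1-L_2$ regularization term, and on the nonnegative orthant these magnitudes equal the components themselves; hence the same chain of inequalities holds verbatim for every feasible $(u,v)$ of \eqref{eq:opt_for_partition2}. It follows that any feasible point possessing a component larger than the threshold has strictly larger objective value than the feasible origin, and therefore cannot be optimal.

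This reasoning confines the search for a minimizer to the set $K:=\{(u,v):u,v\ge 0,\ \norm{(u,v)}_\infty\le 1+\sqrt{m}+2m/\lambda\}$, which is the intersection of the closed nonnegative orthant with a closed infinity-norm ball. Being closed and bounded, $K$ is compact, and it is nonempty since it contains the origin. As $f$ is continuous, the Weierstrass theorem then yields a minimizer of $f$ over $K$, and by the threshold argument this minimizer is optimal for \eqref{eq:opt_for_partition2}. I do not anticipate any genuine obstacle: the single point needing care is the remark that intersecting the compact box with the nonnegative orthant preserves both compactness and nonemptiness, which is immediate.
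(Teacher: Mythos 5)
Your proposal is correct and follows exactly the paper's intended argument: the paper's proof of the corollary is a one-line appeal to the proof of \cref{prop31}, noting (as you do) that restricting to $u,v\ge 0$ leaves the threshold claim intact and that the infinity-norm ball intersected with the nonnegative orthant remains compact, so Weierstrass applies. Your write-up merely spells out the details the paper leaves implicit; there is no gap and no genuinely different route.
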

\begin{proof}
    Immediate from the proof of Proposition \ref{prop31} and restricting $\textbf{u},\textbf{v}\ge \textbf{0}$. \hfill $\qed$
\end{proof}

We are now ready to prove the NP-hardness of the
(\ref{eq:UP}) and (\ref{eq:NUP}) instances in equations \eqref{eq:opt_for_partition3} and \eqref{eq:opt_for_partition2} (respectively) in the upcoming sections. We will start with the (\ref{eq:NUP}) instance \eqref{eq:opt_for_partition2} and derive a polynomial reduction from the partition problem.

\subsection{NP-Hardness of the (\ref{eq:NUP})  instance when $\lambda\in(0,2)$}

In this section, we will show that the (\ref{eq:NUP}) instance \eqref{eq:opt_for_partition2} can be reduced from the partition problem for all $\lambda\in (0,2)$. Assuming that the partition problem with respect to multiset $S=\{a_1,\ldots,a_m\}$ has a solution, we will consider the following lower bound problem:
\begin{equation}\label{eq:nunL1-L2}
\begin{split}
    \min_{\textbf{u},\textbf{v}\in \R^m} & g(\textbf{u},\textbf{v}) = \sum_{i=1}^{m} (u_i+v_i -1)^2 + \lambda\left[\sum_{i=1}^{m}u_i+v_i - \sqrt{\sum_{i=1}^m u_i^2 + v_i^2}\right]\\
    & \text{subject to } u_i,v_i \geq 0 \quad \text{for } i=1,\dots,m.
\end{split}
\end{equation}
Here $g(\textbf{u},\textbf{v})\le f(\textbf{u},\textbf{v})$ is a lower bound function with one less fitting term $(\textbf{a}^\top(\textbf{u}-\textbf{v}))^2$ concerning the partition requirement. When $\lambda\in (0,2)$, the set of optimal solutions to this lower bound problem is stated in Lemma \ref{lemma.optset}.

\begin{lemma}\label{lemma.optset}
    Suppose that $\lambda\in (0,2)$. Let $c(\lambda): =\frac{1}{2}\left(\lambda/\sqrt{m}-\lambda + 2\right)$ and 
    $$
    g^*(\lambda) := \lambda \left( (1-\frac{\lambda}{4})\cdot m + (\frac{\lambda}{2} - 1)\cdot \sqrt{m} - \frac{\lambda}{4}\right).
    $$
    The optimal objective value to problem (\ref{eq:nunL1-L2}) is $g^*(\lambda)$ with optimal solution set
    $$
    Y^* = \{ (\textup{\textbf{u}}^*,\textup{\textbf{v}}^*) \in \R^m \times \R^m | (u_i^*,v_i^*) = (c(\lambda),0) \textup{ or } (0,c(\lambda)) \textup{ for all }i\in [m]\}.
    $$
\end{lemma}

\begin{proof}
For the objective function $g(\textbf{u},\textbf{v})$ in problem (\ref{eq:nunL1-L2}), note that the nonnegativity of any feasible solution $(\textbf{u},\textbf{v})$ yields
\begin{align*}
    g(\textbf{u},\textbf{v}) \ge \sum_{i=1}^{m} (u_i+v_i -1)^2 + \lambda \left( \sum_{i=1}^{m}u_i+v_i - \sqrt{\sum_{i=1}^m (u_i + v_i)^2}\right).
\end{align*}
Here the equality holds if and only if $u_iv_i=0$ for all $i$. Therefore, it suffices to study the optimization problem
\begin{equation}\label{eq:alternate}
\begin{split}
    \min_{\textbf{t}\in\R^m} \  h(\textbf{t}):=&\sum_{i=1}^{m} (t_i-1)^2 + \lambda \left(\sum_{i=1}^{m} t_i - \sqrt{\sum_{i=1}^m t_i^2}\right)\\
    & \text{subject to } \textbf{t}\ge \textbf{0}.
\end{split}
\end{equation}
Note that for all $\textbf{t} = \textbf{u}+\textbf{v}$, we have $g(\textbf{u},\textbf{v})\ge h(\textbf{t})$. Moreover, for
any optimizer $\textbf{t}^*$ of  
problem \eqref{eq:alternate}, we can recover an optimizer $(\textbf{u}^*,\textbf{v}^*)$ of problem (\ref{eq:nunL1-L2}) in which either $(u_i^*,v_i^*)=(0,t_i^*)$ or $(u_i^*,v_i^*)=(t_i^*,0)$ for all $i$. Furthermore, we claim that every optimal solution to \eqref{eq:nunL1-L2} is of this form. Observe that for any feasible solution $(\textbf{u},\textbf{v})$ to \eqref{eq:nunL1-L2} with $u_iv_i > 0$ for some $i$, the solution $\textbf{t}$ with $t_i = u_i + v_i$ for all $i$ is feasible to \eqref{eq:alternate} and satisfies $h(\textbf{t}) < g(\textbf{u},\textbf{v})$. We can then construct the solution $(\tilde{\textbf{u}},\tilde{\textbf{v}}) = (\textbf{t},0)$ to \eqref{eq:nunL1-L2} which satisfies $g(\tilde{\textbf{u}},\tilde{\textbf{v}}) = h(\textbf{t}) < g(\textbf{u},\textbf{v})$. Hence, any feasible $(\textbf{u},\textbf{v})$ to \eqref{eq:nunL1-L2} with $u_iv_i > 0$ for some $i$ is never optimal, and the set of optimal solutions to \eqref{eq:nunL1-L2} is completely determined by the set of optimal solutions to \eqref{eq:alternate}.

By Corollary \ref{existence.corollary}, optimal solutions to problem (\ref{eq:nunL1-L2}) exist, so optimal solutions to problem (\ref{eq:alternate}) must exist too. Note that $\textbf{t}=\textbf{0}$ is the only point of $h:\R^n \rightarrow \R$ which is not differentiable. Except the
point $\textbf{t}=\textbf{0}$, with nonnegativity constraints the Karush–Kuhn–Tucker (KKT) condition is necessary for optimality, i.e., if $\textbf{t}$ is an optimal solution, then there exists multipliers $\mu_i$'s such that
\begin{align}
    \label{eq:KKT_NUP}
    \mu_i\ge 0,\ 2(t_i-1) +\lambda - \frac{\lambda t_i}
    {\sqrt{\sum_{j=1}^m t_j^2}} - \mu_i = 0,\ (-t_i)\mu_i &= 0,\ \forall i\in[m].
\end{align}

We now show that there exists a unique KKT point with smaller objective function value than the non-differentiable point $\textbf{t}=\textbf{0}$. If there exists $i\in [m]$ such that the multiplier $\mu_i>0$, then by \eqref{eq:KKT_NUP} we have $t_i=0$ and $
-2 + \lambda - \mu_i =0$, contradicting our assumption that $\lambda<2$. Therefore, any KKT point must have multipliers $\mu_i =0$ for all $i\in [m]$. By \eqref{eq:KKT_NUP}, for all $i,k\in [m]$ we have 
\begin{equation}\label{eq:relation_for_KKT}
 \lambda t_i  - 2(t_i-1)\sqrt{\sum_{j=1}^m t_j^2} = \lambda\sqrt{\sum_{j=1}^m t_j^2} = \lambda t_k - 2(t_k-1)\sqrt{\sum_{j=1}^m t_j^2},
\end{equation}
The relation in \eqref{eq:relation_for_KKT} yields $t_i = t_k$ since $\lambda t_i - 2(t_i - 1) = \lambda t_k - (2t_k - 1)$, as $\sqrt{\sum_{j=1}^m t_j^2 }\neq 0$. Thus we have
$$
\lambda t_i  - 2(t_i-1)\sqrt{m}t_i = \lambda\sqrt{m}t_i,
$$
whose nonnegative solution is 
$t_i = c(\lambda)$. Thus, there is a unique KKT point $\textbf{t}^*$ with $t_i^*=c(\lambda)$ for all $i\in[m]$. Since $\lambda\in (0,2)$, it is then easy to verify that 
\begin{align*}
    h(\textbf{t}^*) = g^*(\lambda) < \lambda(1-\lambda/4)m < m = h(\textbf{0}).
\end{align*}
Therefore, $\textbf{t}^*$ is the unique optimal solution to problem \eqref{eq:alternate}. \hfill $\qed$
\end{proof}

The optimal set resulting from 
Lemma \ref{lemma.optset} will be enough for us to solve the partition problem by solving a similarly constructed optimization problem.

\begin{proposition}
	\label{thm:partition_opt2}
	Suppose $\lambda\in (0,2)$. The problem of determining the solvability of the partition problem concerning multiset $S=\{a_1,\ldots,a_m\}$ is equivalent to determining whether the optimal objective value of the following optimization problem is $g^*(\lambda)$:
	\begin{align*}
		\begin{split}
			  \min_{\textup{\textbf{u}},\textup{\textbf{v}}\in \R^m} \  (\textup{\textbf{a}}^\top(\textup{\textbf{u}}-\textup{\textbf{v}}))^2+\sum_{i=1}^{m} (u_i+v_i &-1)^2 + \lambda\left[\sum_{i=1}^{m}u_i+v_i - \sqrt{\sum_{i=1}^m u_i^2 + v_i^2}\right]\\
     \text{subject to }  & \ \textup{\textbf{u}},\textup{\textbf{v}}\ge \textup{\textbf{0}}.
		\end{split}
	\end{align*}	
    Here we denote $\textup{\textbf{a}}:=(a_1,\ldots,a_m)^\top$.
\end{proposition}

\begin{proof}
    Let $c(\lambda),g^*(\lambda)$, and $Y^*$ be as defined in Lemma \ref{lemma.optset}. It is clear that for any feasible $u,v$ to problem (\ref{eq:opt_for_partition2}) we have
    $$
    (\textbf{a}^\top(\textbf{u}-\textbf{v}))^2+\sum_{i=1}^{m} (u_i+v_i -1)^2 + \lambda\left[\sum_{i=1}^{m}u_i+v_i - \sqrt{\sum_{i=1}^m u_i^2 + v_i^2}\right] \ge g^*(\lambda),
    $$
    with equality holding if and only if $(\textbf{u},\textbf{v})\in Y^*$ and $\textbf{a}^\top (\textbf{u}-\textbf{v}) =\textbf{0}$.

    Suppose that there exists a solution to the partition problem with respect to multiset $S = \{a_1,\ldots,a_m\}$. Let us introduce vectors $\textbf{u},\textbf{v}\in\mathbb{R}^m$, in which $u_i=c(\lambda)$ (or $v_i=c(\lambda)$ respectively) if $a_i$ is partitioned into the first subset (or second subset respectively), and $u_i=0$ (or $v_i=0$ respectively) otherwise. Clearly, we have $(\textbf{u},\textbf{v})\in Y^*$. Moreover, since the sum of elements in the two subsets are the same, we have $\textbf{a}^\top \textbf{u} = \textbf{a}^\top \textbf{v}$. Therefore, problem (\ref{eq:opt_for_partition2}) achieves an optimal objective value of $g^*(\lambda)$.

    Conversely, for a given partition problem on multiset $S = \{a_1,\ldots,a_m\}$, suppose that $(\textbf{u}^*,\textbf{v}^*)$ is an optimal solution to problem (\ref{eq:opt_for_partition2}) with optimal value $g^*(\lambda)$. Then, we must have $\textbf{a}^\top (\textbf{u}^*-\textbf{v}^*)=0$ and  $(\textbf{u}^*,\textbf{v}^*)$ as an optimal solution to problem (\ref{eq:nunL1-L2}). Hence $(\textbf{u}^*,\textbf{v}^*)\in Y^*$.  By partitioning element $a_i$ to the first set if $u_i^*=c(\lambda)$ and to the second set if $v_i^*=c(\lambda)$ for all $i\in[m]$, we obtain a solution to the partition problem. \hfill $\qed$
\end{proof}

We are now ready to prove that problem \eqref{eq:NUP} is NP-hard. Similar to our proof of the constrained version, we can find a specific instance of problem \eqref{eq:NUP} which is equivalent to problem \eqref{eq:opt_for_partition2}, thus solving the partition problem. Theorem \ref{thm:NP-hard-NUP} formally states the result.

\begin{theorem}\label{thm:NP-hard-NUP}
    For any $\lambda\in (0,2)$, it is NP-hard to solve the nonnegative unconstrained $L_1-L_2$ problem \eqref{eq:NUP}.
\end{theorem}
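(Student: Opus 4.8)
The plan is to mirror the reduction used in the proof of \cref{thm:NCP}: given an arbitrary instance of the partition problem on a multiset $S=\{a_1,\ldots,a_m\}$, I would exhibit a single instance of \cref{eq:NUP} that coincides exactly with problem \cref{eq:opt_for_partition2}, and then invoke \cref{thm:partition_opt2} to transfer the hardness. Since \cref{thm:partition_opt2} already establishes that deciding whether the optimal value of \cref{eq:opt_for_partition2} equals $g^*(\lambda)$ is equivalent to solving the partition problem, the only remaining task is to realize \cref{eq:opt_for_partition2} as a genuine \cref{eq:NUP} instance through an appropriate choice of $A$ and $b$, and to verify that this construction is computable in time polynomial in the size of $S$.

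The key step is matching the least-squares data-fitting term. Setting $n=2m$ and $x=(u_1,\ldots,u_m,v_1,\ldots,v_m)^\top$, I would take the same data as in the proof of \cref{thm:CP},
\[
A = \begin{bmatrix} I_m & I_m \\ a^\top & -a^\top \end{bmatrix}, \qquad b = \begin{bmatrix} \textbf{1}_m \\ 0 \end{bmatrix},
\]
so that $Ax-b = (u+v-\textbf{1}_m,\ a^\top(u-v))^\top$ and hence
\[
\norm{Ax-b}_2^2 = \sum_{i=1}^m (u_i+v_i-1)^2 + (a^\top(u-v))^2.
\]
Under the nonnegativity constraint $x\ge 0$, the regularizer $\lambda(\norm{x}_1-\norm{x}_2)$ becomes precisely $\lambda\bigl(\sum_{i=1}^m (u_i+v_i) - \sqrt{\sum_{i=1}^m u_i^2+v_i^2}\bigr)$. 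Adding the two pieces shows that this \cref{eq:NUP} instance has objective $f(u,v)$ identical to that of \cref{eq:opt_for_partition2}.

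With the instances identified, the conclusion follows quickly. By \cref{existence.corollary} an optimal solution to \cref{eq:opt_for_partition2} exists, so the constructed problem is well posed; by \cref{thm:partition_opt2}, the optimal value of this \cref{eq:NUP} instance equals $g^*(\lambda)$ if and only if the partition instance is solvable. Since $A$, $b$, and the threshold value $g^*(\lambda)$ are all computable in polynomial time from $S$, this yields a valid polynomial-time reduction, and the NP-hardness of the partition problem transfers to \cref{eq:NUP} for every $\lambda\in(0,2)$.

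I expect essentially no genuine obstacle here, because the substantive work has already been front-loaded into \cref{lemma.optset} and \cref{thm:partition_opt2}, which pin down the optimal value and the optimal solution set $Y^*$ of the nonnegative lower-bound problem and handle the interaction between the fitting term and the $L_1-L_2$ regularizer. The only point requiring a moment of care is confirming that the single quadratic term $\norm{Ax-b}_2^2$ simultaneously reproduces both fitting terms of \cref{eq:opt_for_partition2}, namely the partition-balance summand $(a^\top(u-v))^2$ together with $\sum_{i=1}^m(u_i+v_i-1)^2$; this is exactly what the block structure of $A$ and the choice $b=(\textbf{1}_m,0)^\top$ accomplish.
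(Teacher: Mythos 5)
Your proposal is correct and follows essentially the same route as the paper's own proof: the identical choice of $A$ and $b$, the observation that this \cref{eq:NUP} instance coincides with problem \cref{eq:opt_for_partition2}, and the appeal to \cref{thm:partition_opt2} to complete the polynomial-time reduction from the partition problem. Your explicit verification that $\norm{Ax-b}_2^2$ reproduces both fitting terms is a slightly more detailed spelling-out of what the paper dismisses as ``clear by construction,'' but it is the same argument.
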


\begin{proof}
    Fixing any $\lambda\in (0,2)$, we will show that there is a polynomial-time reduction from the partition problem to problem \eqref{eq:NUP}. Specifically, supposing that we have an instance of the partition problem with multiset $S=\{a_1,\dots,a_m\}$, consider the optimization problem \eqref{eq:NUP} in which $n=2m$, $\textbf{x} = (u_1,\ldots,u_m,v_1,\ldots,v_m)^\top$, and 
	$$
	\textbf{A} = \begin{bmatrix}
		\textbf{I}_m & \textbf{I}_m \\
		\textbf{a}^\top & -\textbf{a}^\top
	\end{bmatrix},
	\quad \quad
	\textbf{b}=
	\begin{bmatrix}
		\textbf{1}_m\\
		0
	\end{bmatrix},
	$$
	where $\textbf{I}_m$ is an $m\times m $ identity matrix and $\textbf{1}_m$ is a vector of $m$ ones. By construction, it is clear that this instance of problem \eqref{eq:NUP} is equivalent to problem \eqref{eq:opt_for_partition2}. By Proposition \ref{thm:partition_opt2}, solving such \eqref{eq:NUP} instance is equivalent to determining the solvability of the partition problem, which is NP-hard. \hfill $\qed$
\end{proof}

\subsection{NP-Hardness of the \eqref{eq:UP} instance when $\lambda\in (0,2)$}

Having finished proving the NP-hardness of problem \eqref{eq:NUP}, we are now ready to move onto problem \eqref{eq:UP}. Consider the optimization problem
\begin{equation}\label{eq:unL1-L2}
	\begin{split}
		\min_{\textbf{u},\textbf{v}\in \R^m} g(\textbf{u},\textbf{v}) &= \sum_{i=1}^{m} (u_i+v_i -1)^2 + \lambda\left[\sum_{i=1}^{m}|u_i|+|v_i| - \sqrt{\sum_{i=1}^m u_i^2 + v_i^2}\right]
	\end{split}.
\end{equation}
Our strategy will be to prove the optimal set of \eqref{eq:unL1-L2} is again $Y^*$. As we have already established the existence of optimal solutions to problem \eqref{eq:unL1-L2} in Proposition \ref{prop31}, it suffices to use first order optimality conditions to find any possible optimal solutions, which is the strategy we use
in Lemma \ref{lemma.optset2}. 

\begin{lemma}\label{lemma.optset2}
    Suppose that $\lambda\in (0,2)$. Let $c(\lambda) =\frac{1}{2}\left(\lambda/\sqrt{m}-\lambda + 2\right)$ and 
    $$
    g^*(\lambda) = \lambda \left( (1-\frac{\lambda}{4})\cdot m + (\frac{\lambda}{2} - 1)\cdot \sqrt{m} - \frac{\lambda}{4}\right).
    $$
    The optimal objective value to problem \eqref{eq:unL1-L2} is $g^*(\lambda)$ with optimal solution set
    $$
    Y^* = \{ (\textup{\textbf{u}}^*,\textup{\textbf{v}}^*) \in \R^m \times \R^m | (u_i^*,v_i^*) = (c(\lambda),0) \textup{ or } (0,c(\lambda)) \textup{ for all }i\in [m]\}.
    $$
\end{lemma}

\begin{proof}
We make a few observations on points $(\textbf{u},\textbf{v})$ that would not be optimal for the optimization problem \eqref{eq:unL1-L2}. First, if $(\textbf{u},\textbf{v})$ satisfies $u_i+v_i<0$ for some $i$, then there exists $(\tilde{\textbf{u}},\tilde{\textbf{v}})$ such that $g(\textbf{u},\textbf{v})>g(\tilde{\textbf{u}},\tilde{\textbf{v}})$. In fact, if
\begin{align*}
    \tilde{u}_j = \begin{cases}
        -u_i & j=i
        \\
        u_j & j\not =i,
    \end{cases} \quad
    \tilde{v}_j = \begin{cases}
        -v_i & j=i
        \\
        v_j & j\not =i,
    \end{cases}
\end{align*}
then
\begin{align*}
    g(\textbf{u},\textbf{v}) - g(\tilde{\textbf{u}},\tilde{\textbf{v}}) = (u_i+v_i-1)^2 - (u_i+v_i+1)^2 = (-2)\cdot (2u_i+2v_i) > 0.
\end{align*}

Second, if $(\textbf{u},\textbf{v})$ satisfies $u_i=v_i=0$ for some $i$, then there exists $(\tilde{\textbf{u}},\tilde{\textbf{v}})$ such that $g(\textbf{u},\textbf{v})>g(\tilde{\textbf{u}},\tilde{\textbf{v}})$. In fact, if
\begin{align*}
    \tilde{u}_j = \begin{cases}
        1-\frac{1}{2}\lambda & j=i
        \\
        u_j & j\not =i,
    \end{cases}
    \tilde{v}_j = \begin{cases}
        0 & j=i
        \\
        v_j & j\not =i,
    \end{cases}
\end{align*}
then letting $C:=\sum_{j=1,j\not=i}^{m} u_j^2 + v_j^2$ we have 
\begin{align*}
\begin{split}
    g\textbf{(u},\textbf{v}) - g(\tilde{\textbf{u}},\tilde{\textbf{v}}) = & [1 - \lambda\sqrt{C}] - \left[\frac{1}{4}\lambda^2 + \lambda\left(1-\frac{1}{2}\lambda-\sqrt{(1-\frac{1}{2}\lambda)^2+C}\right)\right]
    \\
    > & 1 - \frac{1}{4}\lambda^2 - \lambda(1-  \frac{1}{2}\lambda)
    \\
    = & \frac{1}{4}(2 - \lambda)^2
    \\
    \ge & 0.
\end{split}
\end{align*}

Third, if $(\textbf{u},\textbf{v})$ satisfies $u_i>0$ and $v_i>0$ for some $i$, then there exists $(\tilde{\textbf{u}},\tilde{\textbf{v}})$ such that $g(\textbf{u},\textbf{v})>g(\tilde{\textbf{u}},\tilde{\textbf{v}})$. In fact, if
\begin{align*}
    \tilde{u}_j = \begin{cases}
        u_i+v_i & j=i
        \\
        u_j & j\not =i,
    \end{cases}
    \tilde{v}_j = \begin{cases}
        0 & j=i
        \\
        v_j & j\not =i,
    \end{cases}
\end{align*}
then letting $C:=\sum_{j=1,j\not=i}^{m} u_j^2 + v_j^2$ we have
\begin{align*}
\begin{split}
    g(\textbf{u},\textbf{v}) - g(\tilde{\textbf{u}},\tilde{\textbf{v}}) = & \lambda\sqrt{(u_i+v_i)^2 + C} -\lambda\sqrt{u_i^2+v_i^2 + C}>0.
\end{split}
\end{align*}
 
By the above observations, for any optimal solution $(\textbf{u},\textbf{v})$ to problem \eqref{eq:unL1-L2}  we may partition its indices by the following:
\begin{align*}
    \begin{split}
        I_1:=\{i|u_i>0,v_i=0\}, I_2:=\{i|u_i>0, v_i<0\}.
    \end{split}
\end{align*}

Since each index of our optimal solution $(\textbf{u},\textbf{v})$ of consideration belongs now to either $I_1$ and $I_2$, from first-order necessary optimality conditions we have
\begin{align*}
    \begin{split}
        0 = & \frac{\partial g}{\partial u_i}(\textbf{u},\textbf{v}) = 2(u_i-1) + \lambda - \frac{\lambda u_i}{\sqrt{\sum_{j=1}^m u_j^2+v_j^2}},\ \forall i\in I_1,
        \\
        0 = & \frac{\partial g}{\partial u_i}(\textbf{u},\textbf{v}) = 2(u_i+v_i - 1) + \lambda - \frac{\lambda u_i}{\sqrt{\sum_{j=1}^m u_j^2+v_j^2}},\ \forall i\in I_2,
        \\
        0 = & \frac{\partial g}{\partial v_i}(\textbf{u},\textbf{v}) = 2(u_i+v_i - 1) - \lambda - \frac{\lambda v_i}{\sqrt{\sum_{j=1}^m u_j^2+v_j^2}},\ \forall i\in I_2,
    \end{split}
\end{align*}
which yield (after performing subtraction and addition on the last two relations)
\begin{equation}
		\label{eq:tmp}
    \begin{split}
        &  \left( 2 - \frac{\lambda}{\sqrt{\sum_{j=1}^m u_j^2+v_j^2}}\right) u_i = 2 - \lambda,\ \forall i\in I_1,
        \\
        & \frac{\lambda }{\sqrt{\sum_{j=1}^m u_j^2+v_j^2}}(u_i-v_i) = 2\lambda,\ \forall i\in I_2,
        \\
        & \left(4 - \frac{\lambda}{\sqrt{\sum_{j=1}^m u_j^2+v_j^2}}\right)(u_i+v_i) = 4,\ \forall i\in I_2.
    \end{split}
\end{equation}
From the 
relations in \eqref{eq:tmp} we have $u_j=u_k$ for all $j,k\in I_1$ and $u_j=u_k$ and $v_j=v_k$ for all $j,k\in I_2$. Let us denote $u_j=t$ for all $j\in I_1$ and $u_j=r$, $v_j=s$ for all $j\in I_2$. Applying our notations and noting that  $\lambda>0$, from the second equation  
in \eqref{eq:tmp} we have
\begin{equation}\label{eq:I1I2_relation}
    \sqrt{|I_1|t^2 + |I_2|(r^2+s^2)} =  \frac{1}{2}(r-s)
    .
\end{equation}
Note that the  
relation in \eqref{eq:I1I2_relation} implies that $I_2=\emptyset$. To see this, assume by contrary that $|I_2|\ge 1$. We would then have
\begin{align*}
    \frac{1}{4}(r-s)^2 \le \frac{1}{2}(r^2+s^2) < |I_2|(r^2+s^2) \le  |I_1|t^2 + |I_2|(r^2+s^2),
\end{align*}
yielding a contradiction. Hence we could only have $I_2=\emptyset$. Recalling that $u_j=t$ and $v_j=0$ for all $j\in I_1$, from the first equation of 	\eqref{eq:tmp}
we can solve that
$t = \frac{1}{2}\left(\lambda/\sqrt{m}-\lambda + 2\right)$. \hfill $\qed$    
\end{proof}

The rest of our analysis is then similar to the previous section concerning the hard \eqref{eq:NUP} instance: the optimal set $Y^*$ is enough to find an optimization problem which solves the partition problem. We state the rest in Proposition \ref{thm:partition_opt3}.

\begin{proposition}
	\label{thm:partition_opt3}
	Suppose that $\lambda\in (0,2)$. The problem of determining the solvability of the partition problem concerning multiset $S=\{a_1,\ldots,a_m\}$ is equivalent to determining whether the optimal objective value of the following optimization problem is $g^*(\lambda)$:
    \begin{align}\label{eqn:partition_opt3}
		\begin{split}
			  \min_{\textup{\textbf{u}},\textup{\textbf{v}}\in \R^m}  &(\textup{\textbf{a}}^\top(\textup{\textbf{u}}-\textup{\textbf{v}}))^2+\sum_{i=1}^{m} (u_i+v_i -1)^2 \\
              &+ \lambda\left[\sum_{i=1}^{m}|u_i|+|v_i| - \sqrt{\sum_{i=1}^m u_i^2 + v_i^2}\right].
		\end{split}
	\end{align}	
    Here we denote $\textup{\textbf{a}}:=(a_1,\ldots,a_m)^\top$.
\end{proposition}

\begin{proof}
    Let $c(\lambda),g^*(\lambda)$, and $Y^*$ be as defined in Lemma \ref{lemma.optset2}. It is clear that for any $\textbf{u},\textbf{v}$ we have
    $$
    (\textbf{a}^\top(\textbf{u}-\textbf{v}))^2+\sum_{i=1}^{m} (u_i+v_i -1)^2 + \lambda\left[\sum_{i=1}^{m}|u_i|+|v_i| - \sqrt{\sum_{i=1}^m u_i^2 + v_i^2}\right] \ge g^*(\lambda),
    $$
    with equality holding if and only if $(\textbf{u},\textbf{v})\in Y^*$ and $\textbf{a}^\top (\textbf{u}-\textbf{v}) =0$. The remainder of the proof follows identically the proof of Proposition \ref{thm:partition_opt2} for problem \eqref{eq:unL1-L2}. \hfill $\qed$
\end{proof}

We are now ready to prove our main result that solving  \eqref{eq:UP} is NP-hard.

\begin{theorem}\label{thm.NP-hard-up}
    For any $\lambda\in (0,2)$, it is NP-hard to solve the unconstrained $L_1-L_2$ problem \eqref{eq:UP}.
\end{theorem}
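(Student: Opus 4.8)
The plan is to establish NP-hardness via a polynomial-time reduction from the partition problem, following the same template used in the proof of \cref{thm:NP-hard-NUP}. The key point is that essentially all of the analytical work has already been done: \cref{prop31} guarantees that an optimal solution to the hard instance \cref{eq:opt_for_partition3} exists, \cref{lemma.optset2} identifies the optimal solution set $Y^*$ of the associated lower bound problem \cref{eq:unL1-L2}, and \cref{thm:partition_opt3} translates the solvability of the partition problem into a question about the optimal value of \cref{eq:opt_for_partition3}. What remains is simply to exhibit \cref{eq:opt_for_partition3} as a genuine instance of \cref{eq:UP}.

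Concretely, given a partition instance on the multiset $S=\{a_1,\ldots,a_m\}$, I would set $n=2m$, write the decision variable as $x=(u_1,\ldots,u_m,v_1,\ldots,v_m)^\top$, and take
\[
A = \begin{bmatrix} I_m & I_m \\ a^\top & -a^\top \end{bmatrix}, \qquad b = \begin{bmatrix} \textbf{1}_m \\ 0 \end{bmatrix}.
\]
With this choice, $Ax-b$ stacks the residual $u+v-\textbf{1}_m$ on top of $a^\top(u-v)$, so that $\norm{Ax-b}_2^2 = \sum_{i=1}^m (u_i+v_i-1)^2 + (a^\top(u-v))^2$. Since $\norm{x}_1 = \sum_{i=1}^m(|u_i|+|v_i|)$ and $\norm{x}_2 = \sqrt{\sum_{i=1}^m u_i^2+v_i^2}$, the objective of this \cref{eq:UP} instance coincides exactly with $f(u,v)$ in \cref{eq:opt_for_partition3}. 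Because this construction is clearly carried out in polynomial time, \cref{thm:partition_opt3} then gives that deciding whether the optimal value equals $g^*(\lambda)$ is equivalent to deciding the partition problem, which is NP-hard.

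The genuine obstacle in this section is not the theorem itself but \cref{lemma.optset2}, whose proof must rule out, without the aid of nonnegativity constraints, every sign pattern of the pairs $(u_i,v_i)$ except the two that produce $Y^*$. The reduction above is the routine final step: once the optimal set of the lower bound problem is pinned down to $Y^*$ and its optimal value to $g^*(\lambda)$, the fitting term $(a^\top(u-v))^2$ acts exactly as in the nonnegative case, forcing $a^\top(u-v)=0$ at optimality and thereby encoding a balanced partition. Thus the proof of the present theorem is essentially identical to that of \cref{thm:NP-hard-NUP}, with \cref{thm:partition_opt3} substituted for \cref{thm:partition_opt2}.
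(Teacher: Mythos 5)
Your proposal is correct and matches the paper's own proof essentially verbatim: the same construction of $A$, $b$ with $n=2m$, the same identification of the resulting \cref{eq:UP} instance with \cref{eq:opt_for_partition3}, and the same appeal to \cref{thm:partition_opt3} to complete the reduction from the partition problem. If anything, you are slightly more careful than the paper, which in this spot cites \cref{eq:unL1-L2} (the lower bound problem without the $(a^\top(u-v))^2$ term) where it plainly means \cref{eq:opt_for_partition3}.
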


\begin{proof}
    Fixing any $\lambda\in (0,2)$, we will show that there is a polynomial-time reduction from the partition problem to problem \eqref{eq:UP}. Specifically, supposing that we have an instance of the partition problem with multiset $S=\{a_1,\dots,a_m\}$, consider the optimization problem \eqref{eq:UP} in which $n=2m$, $\textbf{x} = (u_1,\ldots,u_m,v_1,\ldots,v_m)^\top$, and 
	$$
	\textbf{A} = \begin{bmatrix}
		\textbf{I}_m & \textbf{I}_m \\
		\textbf{a}^\top & -\textbf{a}^\top
	\end{bmatrix},
	\quad \quad
	\textbf{b}=
	\begin{bmatrix}
		\textbf{1}_m\\
		0
	\end{bmatrix},
	$$
	where $\textbf{I}_m$ is an $m\times m $ identity matrix and $\textbf{1}_m$ is a vector of $m$ ones. By construction, it is clear that this instance of problem \eqref{eq:UP} is equivalent to problem 
     \eqref{eqn:partition_opt3}. By Proposition \ref{thm:partition_opt3}, solving such \eqref{eq:UP} instance is equivalent to determining the solvability of the partition problem, which is NP-hard. \hfill $\qed$
\end{proof}

\subsection{NP-hardness when $\lambda \geq 2$} 
While our NP-hardness proofs in the previous two sections assume that $\lambda\in(0,2)$, we will show in this section that when $\lambda \geq 2$, problems \eqref{eq:UP} and \eqref{eq:NUP} are still NP-hard. Specifically, let us consider problem instances in which
$$
\textbf{A} = \sqrt{\lambda} \cdot \begin{bmatrix}
		 \textbf{I}_m & \textbf{I}_m \\
		\textbf{a}^\top & -\textbf{a}^\top
	\end{bmatrix},
	\quad \quad
	\textbf{b}=
	\sqrt{\lambda}\cdot \begin{bmatrix}
		 \textbf{1}_m\\
		0
	\end{bmatrix},
$$
and $\textbf{x} = (u_1,\dots, u_m, v_1,\dots, v_m)$. Observe that the instance of \eqref{eq:UP} with this choice of $\textbf{A}$ and $\textbf{b}$ 
is then the optimization problem
	\begin{align*}
		\begin{split}
			  \min_{\textbf{u},\textbf{v}\in \R^m} \  (\sqrt{\lambda} \textbf{a}^\top(\textbf{u}&-\textbf{v}))^2  +\sum_{i=1}^{m} (\sqrt{\lambda} u_i+\sqrt{\lambda} v_i -\sqrt{\lambda})^2 \\ & + \lambda\left[\sum_{i=1}^{m}u_i+v_i - \sqrt{\sum_{i=1}^m u_i^2 + v_i^2}\right].
		\end{split}
	\end{align*}
    Clearly, we may factor $\lambda$ and instead consider the optimization problem
    	\begin{align*}
		\begin{split}
			   \min_{\textbf{u},\textbf{v}\in \R^m} \  ( \textbf{a}^\top(\textbf{u}-\textbf{v}))^2+\sum_{i=1}^{m} ( u_i+ v_i &-1)^2 + \sum_{i=1}^{m}u_i+v_i - \sqrt{\sum_{i=1}^m u_i^2 + v_i^2},
		\end{split}
	\end{align*}
which is NP-hard by the proof of Theorem \ref{thm.NP-hard-up} (with $\lambda=1$). The same argument may be made for problem \eqref{eq:NUP} with the proof of Theorem \ref{thm:NP-hard-NUP}, meaning both \eqref{eq:UP} and \eqref{eq:NUP} are actually NP-hard for any finite $\lambda >0$.

There is still a slight distinction between the cases of $\lambda \in (0,2)$ and $\lambda \geq 2$ in our NP-hardness results. When $\lambda \geq 2$, the above choice of $\textbf{A}$ and $\textbf{b}$ must depend on the value of $\lambda$ to prove NP-hardness with our proof techniques. On the contrary, When $\lambda \in (0,2)$, the choice of $\textbf{A}$ and $\textbf{b}$ stated in Theorems \ref{thm:NP-hard-NUP} and \ref{thm.NP-hard-up} does not depend on $\lambda$. Considering the fact that $\lambda$ is usually selected as a tuning parameter while $\textbf{A}$ and $\textbf{b}$ are already determined, our results on the case with $\lambda\in (0,2)$ are slightly stronger.

\section{Concluding Remarks} In this paper, we prove that it is NP-hard to solve both the constrained and unconstrained formulation of the linear reconstruction problem subject to the $L_1-L_2$ minimization. We also show that restricting the feasible set to a smaller one by adding nonnegative constraints does not change the NP-hardness nature of the problems. 

While our results are negative on the computational tractability of the $L_1-L_2$ problem, our NP-hardness proof is based on a worse-case instance. It is interesting to study whether there are a smaller class of $L_1-L_2$ problems whose exact solutions could be solved in polynomial time. 

It should also be noted that our result focuses on the computational tractability issue of the exact optimal solution. While our result reveals that solving the exact solution is NP-hard, it remains an interesting problem on studying the computational tractability of an approximate solution. Recently, there has been new research results developed on the NP-hardness of solving approximate solutions to certain sparse optimization problems; see \cite{JMLR:v20:17-373} and the references within. While the $L_1-L_2$ regularized problem does not belong to the problems covered in \cite{JMLR:v20:17-373}, it is interesting to see if the reduction techniques in \cite{JMLR:v20:17-373} could shed some light on the complexity analysis of approximation of the $L_1-L_2$ regularized problem.
 
\section*{Conflict of interest}
 The authors declare that they have no conflict of interest.

\section*{Declarations}
\textbf{Funding:} Part of this research was supported by NSF award DMS-1913006.

\section*{Data Availability Statement}

The authors do not analyze or generate any datasets because this work is primarily a theoretical mathematical approach.

\bibliographystyle{spmpsci}
\bibliography{bibliography}

\end{document}